\newtheorem{thm}{Theorem}[section]
\newtheorem{lem}{Lemma}[section]
\newtheorem{cor}{Corollary}[section]
\DeclareMathOperator{\di}{div}
\begin{document}
\title{Carleman Estimate for Surface in Euclidean Space at Infinity}
\author{Ao Sun}
\email{aosun@mit.edu}
\address{Massachusetts Institute of Technology\\
Department of Mathematics}
\date{\today}
\maketitle
\begin{abstract}
This paper develops a Carleman type estimate for immersed surface in Euclidean space at infinity. With this estimate, we obtain an unique continuation property for harmonic functions on immersed surfaces vanishing at infinity, which leads to rigidity results in geometry.
\end{abstract}

\section{Introduction}

Let $f$ be a harmonic function defined in a connected open set $U\subset\mathbb{R}^n$. If $f\equiv0$ in an open subset of $U$, then $f\equiv0$ in $U$ because it is analytic. More generally, if derivatives of all orders of $u$ vanish at some point $p\in U$, then $f$ is constant in $U$. This property is called the {\bf unique continuation} property for harmonic functions. In general, unique continuation property refers the phenomenon that if the solution to certain differential equations vanishes to $\infty$ order somewhere, then it must vanish identically.

Generally, there are two major approaches to show unique continuation for solutions to elliptic equations. One uses Carleman estimates, which is a weighted-$L^2$ inequality first developed by Carleman. See  ~\cite{AKS}, ~\cite{P} for some early results, and  ~\cite{JK}, ~\cite{K}, ~\cite{KT} for some recent developments. The other approach uses the Almgren frequency function, which is first developed by Garofalo and Lin in ~\cite{GL}. Also see ~\cite{CM2} for the application of the
second approach.

In this paper, we prove Carleman estimates for immersed surface and get the unique continuation property for functions on immersed surface in Euclidean space. As a result, we get the following rigidity theorem:

\begin{thm}[A part of Theorem \ref{thmplane}]
Suppose $\Sigma$ is an immersed minimal surface in $\mathbb{R}^n$ with at most exponential area growth. If one end of $\Sigma$ is asymptotic to a plane exponentially, in the sense that this end can be viewed as a graph of a vector valued function $u$ over the plane, such that
$$\limsup_{r=\vert x\vert\to\infty}\vert e^{\sigma r}u(x)\vert=0,\mbox{ for all $\sigma>0$}$$

then $\Sigma$ is just the plane.
\end{thm}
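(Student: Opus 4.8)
The plan is to deduce this geometric rigidity statement from the unique continuation property for harmonic functions on $\Sigma$ that follows from the Carleman estimate proved above. The bridge between the geometry and the analysis is that minimality makes the ambient coordinate functions harmonic: for the isometric immersion $F\colon\Sigma\to\mathbb{R}^n$ one has $\Delta_\Sigma F=\vec{H}=0$, so each $x^i|_\Sigma$ is a harmonic function on $\Sigma$. Moreover a minimal immersion is real-analytic, so $\Sigma$ also enjoys the ordinary (interior) unique continuation property: a harmonic function on $\Sigma$ vanishing on a nonempty open set vanishes identically.

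After an ambient rotation I may assume the plane to which the given end is asymptotic is $P=\{x^3=\cdots=x^n=0\}\cong\mathbb{R}^2$, and write that end as the graph $\{(x,u(x)):x\in\mathbb{R}^2,\ |x|>R\}$ with $u=(u_1,\dots,u_{n-2})$. The ``height'' functions $h_j:=x^{j+2}|_\Sigma$, for $j=1,\dots,n-2$, are then harmonic on $\Sigma$ and satisfy $h_j=u_j$ along this end. The hypothesis $\limsup_{r=|x|\to\infty}|e^{\sigma r}u(x)|=0$ for every $\sigma>0$ says precisely that each $h_j$ decays faster than any exponential in the ambient radius along that end.

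Next I would feed the $h_j$ into the unique continuation theorem at infinity. Since $u\to 0$, rescaling the minimal surface (graph) equation to unit balls — where it is uniformly elliptic for small $u$ — and applying interior elliptic estimates shows that $u$ and all its derivatives inherit the super-exponential decay; in particular the induced metric on the end is $C^k$-close to the flat metric, the intrinsic distance on $\Sigma$ restricted to the end is comparable to $|x|$, and the area of $\Sigma$ grows at most exponentially by hypothesis. Thus each $h_j$ is a harmonic function on $\Sigma$ vanishing to infinite exponential order at infinity along this end, so the Carleman estimate forces $h_j\equiv 0$ on a neighborhood of infinity inside the end. By the interior unique continuation property noted above, $h_j\equiv 0$ on all of $\Sigma$, i.e. $\Sigma\subset P$.

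Finally, a complete connected minimal surface lying in the $2$-plane $P$ is flat, the immersion into $P\cong\mathbb{R}^2$ is a local isometry, hence a covering of $\mathbb{R}^2$, hence a diffeomorphism; with the given end now literally the exterior of a disk in $P$, this forces $\Sigma=\mathbb{R}^2$. I expect the main obstacle to be the middle step: verifying that the super-exponential decay of $u$ in the \emph{ambient} radial variable genuinely supplies the decay hypothesis of the Carleman estimate, which is formulated intrinsically on $\Sigma$, and checking that ``at most exponential area growth'' is exactly the condition that lets one choose the Carleman weight so as to absorb the curvature and lower-order error terms while pushing the outer annular cutoff to infinity. The remaining points are routine.
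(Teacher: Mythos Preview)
Your proposal is correct and follows essentially the same route as the paper: use that the ambient coordinate functions $x^{3},\dots,x^{n}$ are $\Delta_\Sigma$-harmonic on a minimal $\Sigma$, feed their super-exponential decay along the graphical end into the Carleman-based unique continuation theorem at infinity to get $x^{j}\equiv 0$ on the end, and then propagate to all of $\Sigma$. The only cosmetic differences are that the paper concludes $\Sigma=P$ via the maximum principle rather than your covering argument, and is terser about why the end satisfies the tilt and curvature hypotheses of the unique continuation theorem, whereas you spell out the intrinsic-versus-extrinsic distance comparison and the elliptic estimates explicitly.
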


Another rigidity result in this paper concerns the minimal surfaces in $\mathbb{R}^3$

\begin{thm}[Theorem \ref{thmr}]
Suppose $\Sigma$ is a minimal surface in $\mathbb{R}^3\setminus B_{R_0}$ with at most exponential area growth and $\vert x^\top\vert^2\geq (1-\lambda)r^2$ for some $\lambda<1$. Moreover we assume $\vert A\vert\leq C/r^{1/4}$ for some constant $C$. Then if $\Sigma'$ is a minimal surface, which asymptotic to a same end with $\Sigma$ in the following sense: $\Sigma'$ can be viewed as a graph $\Sigma_u$ over $\Sigma$, and for all $\sigma >0$ $$\limsup_{r\to\infty}\vert e^{\sigma r}u\vert=0,\vert\nabla u\vert \leq C\frac{\vert u\vert^{1/2}}{r^{1/4}}$$
Then $\Sigma'$ must coincide with $\Sigma$.
\end{thm}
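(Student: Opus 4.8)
The plan is to run this paper's unique continuation machinery — the one built from the Carleman estimate for surfaces at infinity — on the graph function $u$. Three things are needed: (i) $u$ satisfies an elliptic differential inequality whose potential decays at infinity; (ii) $u$ vanishes to infinite order at infinity; (iii) the background $\Sigma$ satisfies the geometric hypotheses under which the Carleman estimate holds. Conclusion (iii) is read off from the hypotheses: the bound $|x^\top|^2\ge(1-\lambda)r^2$ says exactly that $|\nabla_\Sigma r|^2=|x^\top|^2/r^2\in[1-\lambda,1]$, so the gradient of the weight $\phi=\phi(r)$ (comparable to $r$) is bounded below, and minimality gives $\Delta_\Sigma|x|^2=4$, whence $r\,\Delta_\Sigma r=2-|\nabla_\Sigma r|^2\in[1,1+\lambda]$ — the control on $\Delta_\Sigma\phi$ the estimate requires. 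For (ii): $\limsup_{r\to\infty}|e^{\sigma r}u|=0$ for all $\sigma>0$, together with at most exponential area growth of $\Sigma$, gives $\int_\Sigma e^{2\tau\phi}|u|^2<\infty$ for every $\tau$ (take $\sigma$ larger than $\tau$ plus the area-growth exponent), and by interior estimates the same holds with $|u|$ replaced by $|\nabla u|$; so $u$ lies in all the weighted $L^2$ spaces in which the estimate is stated.

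Establishing (i) is the substance. Since $\Sigma'=\Sigma_u$ is a normal graph over $\Sigma$ and both surfaces are minimal, expanding the minimal surface equation of $\Sigma'$ over $\Sigma$ yields $\Delta_\Sigma u+|A|^2u=\mathcal Q(u)$, where the remainder $\mathcal Q$ is at least quadratic in the pair $(u,\nabla u)$ and contains $\nabla^2 u$ only linearly with coefficient $O(|u|+|\nabla u|)$, its geometric coefficients being controlled by $|A|\le Cr^{-1/4}$ and the ensuing bound on $\nabla A$ (via the Simons identity and interior estimates at unit scale, admissible since $|A|\to0$ makes $\Sigma$ uniformly graphical over its tangent planes there). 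Inserting $|A|\le Cr^{-1/4}$ and $|\nabla u|\le C|u|^{1/2}r^{-1/4}$ — so that $|\nabla u|^2\le C|u|\,r^{-1/2}$ and $|\nabla u|\le Cr^{-1/4}$ once $|u|\le1$ — bounds every term of $\mathcal Q$ other than its second-order part by $W(r)(|u|+|\nabla u|)$ with $W(r)\to0$; the second-order term $(|u|+|\nabla u|)\nabla^2 u$ is absorbed using interior $W^{2,2}$ estimates for the linear elliptic equation that $u$ solves (obtained by differentiating the minimal surface equation along the family joining $\Sigma$ to $\Sigma'$), which give $\|\nabla^2 u\|_{L^2(B_{1/2}(p))}\le C\|u\|_{L^2(B_1(p))}$. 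The outcome is an elliptic differential inequality $|\Delta_\Sigma u|\le W(r)(|u|+|\nabla u|)$ with $W(r)\to0$, valid on $\Sigma\cap\{r>R\}$ for $R$ large. I expect this step — fixing the structure of $\mathcal Q$, extracting the right decay from the $1/4$-exponents, and handling the second-order nonlinearity — to be the main obstacle; it is where all of $|A|\le Cr^{-1/4}$, $|\nabla u|\le C|u|^{1/2}r^{-1/4}$, and $|x^\top|^2\ge(1-\lambda)r^2$ get used.

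With (i)–(iii) in hand the conclusion follows the usual pattern. Cutting $u$ off near $\partial B_{R_0}\cap\Sigma$ and applying the Carleman estimate, which schematically reads
$$\tau^3\int e^{2\tau\phi}|v|^2+\tau\int e^{2\tau\phi}|\nabla v|^2\ \le\ C\int e^{2\tau\phi}|\Delta_\Sigma v|^2,$$
to $v=\chi u$, then inserting the inequality of (i) and absorbing the $W(r)^2(|u|^2+|\nabla u|^2)$ terms into the left side for $\tau$ large, leaves only the commutator terms of the cutoff, supported in a fixed compact annulus and bounded by $Ce^{2\tau\phi(R_0+2)}$. As $\phi$ is increasing, the left side is at least $\tau^3 e^{2\tau\phi(R_0+2)}\int_{\{r>R_0+2\}}|u|^2$, so $\int_{\{r>R_0+2\}}|u|^2\le C\tau^{-3}\to0$ as $\tau\to\infty$, forcing $u\equiv0$ on $\Sigma\cap\{r>R_0+2\}$. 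Since $u$ solves an elliptic equation on $\Sigma$ — equivalently, since minimal surfaces are real-analytic — it then vanishes identically on $\Sigma$, and therefore $\Sigma'=\Sigma$.
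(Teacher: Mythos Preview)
Your strategy matches the paper's: verify that $\Sigma$ satisfies the geometric hypotheses of the unique continuation theorem (Theorem~\ref{thmu}), show that $u$ obeys the required elliptic inequality, and invoke that theorem. The paper's own proof is two sentences long because both checks are immediate once the preceding Corollary is available.

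Where you diverge is in step (i). You expand the remainder $\mathcal Q$ to include $\nabla^2 u$ and $\nabla A$ contributions, then propose Simons-type bounds and interior $W^{2,2}$ estimates to absorb them. This detour is unnecessary, and the $W^{2,2}$ absorption you sketch is not clean: the weight $e^{2\tau\phi}$ varies by a factor $e^{O(\tau)}$ across a unit ball, which competes badly with the polynomial $\tau$-gain in the Carleman inequality. The paper sidesteps all of this by citing the Corollary following the quoted Lemma (Lemma~2.26 of \cite{CM}), which already delivers the minimal-graph equation in the form
\[
\Delta_\Sigma u + |A|^2 u = \tilde Q,\qquad |\tilde Q|\le C\bigl(|u|^2|A|^2 + |\nabla u|^2\bigr),
\]
with no second derivatives of $u$ in the remainder. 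Now the specific hypotheses do all the work: $|A|\le Cr^{-1/4}$ gives $|A|^2|u|\le C|u|/r^{1/2}$ and (since $|u|\to 0$) $|u|^2|A|^2\le C|u|/r^{1/2}$; and $|\nabla u|\le C|u|^{1/2}r^{-1/4}$ squares to $|\nabla u|^2\le C|u|/r^{1/2}$. Hence $|\Delta_\Sigma u|\le C|u|/r^{1/2}$ directly, with no gradient term on the right, and one applies Theorem~\ref{thmu} as a black box rather than rerunning the Carleman argument by hand. The point you may have missed is that the peculiar hypothesis $|\nabla u|\le C|u|^{1/2}r^{-1/4}$ is engineered precisely so that, upon squaring, the first-order contribution becomes a zeroth-order term with the exact $r^{-1/2}$ decay that Theorem~\ref{thmu} demands.
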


Above theorems are generalizations of unique continuation to minimal surfaces: if an end of a minimal surface is asymptotic to an end of another minimal surface in certain way, then they must be coincident. These theorems are just some of the applications of the following theorem of unique continuation:

\begin{thm}
\label{thmu}
Let $\Sigma=\Sigma^2$ be an immersed surface in $\mathbb{R}^n$. Suppose there are constants $R_0,\lambda,C_1,C_2,t$ such that over the part of $\Sigma$ outside $B_{R_0}$,

\begin{enumerate}
    \item $\vert A\cdot x\vert\leq C_1<1/15$ and $2+H\cdot x>0$
    \item $\vert\nabla H\cdot x\vert\leq C_2$
    \item $\Sigma$ tilts no more than $\lambda<1$, i.e. for any $x\in\Sigma$, $\vert x^\top\vert^2\geq (1-\lambda)r^2$
    \item area growth of $\Sigma$ is at most exponential, i.e. $Area(\Sigma\cap B_r)\leq e^{tr}\mbox{ for any $r>R_0$}$
\end{enumerate}

Then for $u$ a function on $\Sigma\setminus B_{R_0}$ satisfying the following elliptic inequality:
\begin{equation}
\vert\Delta_\Sigma u\vert\leq\frac{C}{r^{1/2}}\vert u\vert,\mbox{ for some constant $C$}
\end{equation}
with exponential decay to $0$ at infinity, i.e.
\begin{equation}
\limsup_{r\to\infty}\vert e^{sr}u\vert=0 \mbox{ for any fixed $s>0$}
\end{equation}
Then $u\equiv0$
\end{thm}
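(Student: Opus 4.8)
The plan is to prove Theorem~\ref{thmu} from a Carleman estimate whose weight is an essentially linear function of the extrinsic distance $r=|x|$, and then run the usual cut-off/iteration argument. Throughout, the decisive structural point is that every geometric quantity which will appear can be written through the position vector: on $\Sigma$ one has $\nabla_\Sigma|x|^2=2x^\top$, $\Delta_\Sigma|x|^2=2(2+H\cdot x)$, and $\hess_\Sigma|x|^2=2g+2\langle A,x^\perp\rangle$, so hypotheses (1)--(3) translate respectively into: the weight has no critical point outside $B_{R_0}$ (since $|\nabla_\Sigma r|^2=|x^\top|^2/r^2\ge 1-\lambda>0$); $|x|^2$ is subharmonic and $\Delta_\Sigma r$ is comparable to $1/r>0$; and $\hess_\Sigma r=\tfrac1r\bigl(g-dr\otimes dr+\langle A,x^\perp\rangle\bigr)$ is uniformly positive on the directions tangent to the level sets of $r$, with a radial error of size $O(C_1/r)$. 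Since only $A\cdot x$ (not $|A|$) is controlled, the whole computation must be carried out so that the intrinsic curvature of $\Sigma$ never enters; this is why one should use the Rellich--Pohozaev multiplier identity
\[
\int_\Sigma (\nabla_\Sigma\phi\cdot\nabla_\Sigma v)\,\Delta_\Sigma v\,d\mu \;=\; -\int_\Sigma \hess_\Sigma\phi(\nabla_\Sigma v,\nabla_\Sigma v)\,d\mu\;+\;\tfrac12\int_\Sigma (\Delta_\Sigma\phi)\,|\nabla_\Sigma v|^2\,d\mu\;+\;(\text{boundary}),
\]
rather than the Bochner formula: this identity sees only $\hess_\Sigma\phi$ and $\Delta_\Sigma\phi$, hence only $A\cdot x$, $H\cdot x$ and, via hypothesis (2) controlling $\nabla_\Sigma(\Delta_\Sigma r)$, their first derivatives.

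The estimate I would establish is: there are $R_1\ge R_0$ and $C$ so that for all large $\tau$ and all $v\in C_c^\infty(\Sigma\setminus B_{R_1})$,
\[
\tau\int_\Sigma e^{2\tau\phi}\,|v|^2\,d\mu \;\le\; C\int_\Sigma e^{2\tau\phi}\,|\Delta_\Sigma v|^2\,d\mu,
\]
with $\phi=\phi(r)$ a fixed strictly increasing function satisfying $\phi(r)\le Cr$. To get it, set $w=e^{\tau\phi}v$ and split the conjugated operator $e^{\tau\phi}\Delta_\Sigma(e^{-\tau\phi}\,\cdot\,)=S+A$ into its self-adjoint part $S=\Delta_\Sigma+\tau^2|\nabla_\Sigma\phi|^2$ and its skew-adjoint part $A=-2\tau(\nabla_\Sigma\phi\cdot\nabla_\Sigma+\tfrac12\Delta_\Sigma\phi)$, so that $\|(S+A)w\|^2=\|Sw\|^2+\|Aw\|^2+\langle[S,A]w,w\rangle$ modulo boundary terms. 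Using the multiplier identity above repeatedly, $\langle[S,A]w,w\rangle$ reduces to $4\tau\int\hess_\Sigma\phi(\nabla_\Sigma w,\nabla_\Sigma w)+4\tau^3\int\hess_\Sigma\phi(\nabla_\Sigma\phi,\nabla_\Sigma\phi)\,w^2$ plus genuinely lower-order terms (bounded once $r\ge R_0$ by hypotheses (1)--(2)); the level-set-tangential part of the first term is coercive because $|A\cdot x|<1$, while its possibly negative radial part, of size $O(C_1/r)$, is dominated by $\|Aw\|^2\gtrsim\tau^2\int|\nabla_\Sigma r\cdot\nabla_\Sigma w|^2$ once $\tau$ is large. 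The crux — and the main obstacle of the whole proof — is producing the coercive $\int e^{2\tau\phi}v^2$ term: the radial weight $\phi(r)\sim r$ is only \emph{pseudoconvex}, not strictly convex (a strictly convex radial weight would have to grow faster than $r\log r$, incompatible with the hypothesised decay of $u$), so the coercive term comes from the pseudoconvex-weight machinery — combining the $\tau^3$-term with a microlocalized portion of $\|Sw\|^2$ near the characteristic set, equivalently Hörmander's Carleman estimate under the Poisson-bracket condition — and it is here that the thresholds $C_1<1/15$, $2+H\cdot x>0$, and $\lambda<1$ are consumed.

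Granting the Carleman estimate, the rest is routine. Apply it to $v=\chi u$, $\chi$ a cut-off equal to $1$ on $B_{R_2}\setminus B_{R_1}$ and supported in $B_{2R_2}\setminus B_{R_1-1}$; since $\Delta_\Sigma(\chi u)=\chi\Delta_\Sigma u+2\nabla_\Sigma\chi\cdot\nabla_\Sigma u+(\Delta_\Sigma\chi)u$ and $|\Delta_\Sigma u|^2\le C^2r^{-1}u^2$ with $r^{-1}\to0$, the main term on the right is absorbed into the left for $\tau$ large, leaving only commutator terms supported in the thin shells near $\partial B_{R_1}$ and near $\partial B_{R_2}$. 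The outer shell contribution is at most $e^{2\tau\phi(2R_2)}\le e^{2C\tau R_2}$ times $\int_{r\sim R_2}(|\nabla_\Sigma u|^2+u^2)$, which by the curvature-free Caccioppoli inequality is $\lesssim\int_{r\sim R_2}u^2\lesssim e^{-2sR_2}\,\mathrm{Area}(\Sigma\cap B_{2R_2+1})\lesssim e^{-2sR_2}e^{t(2R_2+1)}$; choosing $s>C\tau+t$ (legitimate, as $u$ decays faster than every exponential) and using hypothesis (4), this tends to $0$ as $R_2\to\infty$, yielding $\tau\int_{\Sigma\setminus B_{R_1}}e^{2\tau\phi}u^2\le C_{R_1}e^{2\tau\phi(R_1+\frac12)}$. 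Comparing with $\tau\int_{\Sigma\cap(B_{R_1+2}\setminus B_{R_1+1})}e^{2\tau\phi}u^2\ge\tau e^{2\tau\phi(R_1+1)}\int_{\Sigma\cap(B_{R_1+2}\setminus B_{R_1+1})}u^2$ and letting $\tau\to\infty$, the fixed exponential gap $\phi(R_1+1)>\phi(R_1+\frac12)$ beats the factor $\tau$, forcing $u\equiv0$ on that shell. Letting $R_1$ range over $[R_0,\infty)$ gives $u\equiv0$ on $\Sigma\setminus B_{R_0+1}$; finally, on the collar $\Sigma\cap(B_{R_0+1}\setminus B_{R_0})$ the inequality $|\Delta_\Sigma u|\le C'|u|$ holds and $u$ vanishes on an open set, so by the classical interior unique continuation theorem (Aronszajn) applied on each component — a component lying entirely outside $B_{R_0}$ being impossible, as $\int\Delta_\Sigma|x|^2=2\int(2+H\cdot x)>0$ cannot vanish on a closed surface — one concludes $u\equiv0$ on $\Sigma\setminus B_{R_0}$.
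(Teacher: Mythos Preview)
Your overall architecture---Carleman estimate with weight comparable to $e^{\tau r}$, then a cut-off argument using the elliptic inequality, the exponential decay, and the area bound---matches the paper's, and your reading of how hypotheses (1)--(3) enter through $\hess_\Sigma r$, $\Delta_\Sigma r$ and $\nabla_\Sigma(\Delta_\Sigma r)$ is exactly right. The endgame (send the outer radius to infinity, then play with $\tau$) is also essentially the paper's, only organized slightly differently.

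The substantive difference, and the place where your proposal is incomplete, is the Carleman inequality itself. You state $\tau\int e^{2\tau\phi}v^2\le C\int e^{2\tau\phi}|\Delta_\Sigma v|^2$ with $\phi(r)\sim r$, correctly observe that $\phi=r$ is only pseudoconvex (since $\hess_\Sigma r(\nabla_\Sigma r,\nabla_\Sigma r)$ has no uniform positive lower bound), and then appeal to ``the pseudoconvex-weight machinery\,/\,H\"ormander's Poisson-bracket condition'' to extract the zero-order coercive term. That appeal is not carried out: on a surface with no control on $|A|$ itself, one cannot simply quote H\"ormander's theorem, and the commutator $\langle[S,A]w,w\rangle$ you write down gives only $4\tau\int\hess_\Sigma r(\nabla w,\nabla w)$ (positive on level-set directions, of size $O(C_1/r)$ radially) plus a $\tau^3$ term of order $O(1/r)$ with indefinite sign. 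How $\|Sw\|^2$ is then ``microlocalized'' to produce a genuine $\tau\int w^2$ term is exactly the hard step, and it is left as a name rather than a computation.

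The paper sidesteps this entirely by a small but decisive device: it proves instead
\[
\int_\Sigma r\,e^{2\sigma r}(\Delta_\Sigma v)^2 \;\ge\; C\sigma^2\int_\Sigma e^{2\sigma r}v^2,
\]
i.e.\ it inserts an extra factor of $r$ on the Laplacian side. With this weight the Rellich--Pohozaev identity is applied to the vector field $r\nabla_\Sigma r=x^\top$ rather than $\nabla_\Sigma r$; the point is that $r\Delta_\Sigma r=(2+H\cdot x)-|\nabla_\Sigma r|^2$ is \emph{bounded below by a positive constant} (this is exactly where $2+H\cdot x>0$ and $\lambda<1$ are spent), so the divergence term $\di_\Sigma\bigl(\sigma^2 r|\nabla_\Sigma r|^2\nabla_\Sigma r\bigr)$ contributes a coercive $\sigma^3 w^2$ term directly, without any microlocal argument and without ever using $\|Sw\|^2$. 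The gradient terms that appear are then converted back to $v$ and $\Delta_\Sigma v$ by an elementary reverse-Poincar\'e inequality $\int e^{2\sigma r}|\nabla_\Sigma v|^2\le\int e^{2\sigma r}(v^2+(\Delta_\Sigma v)^2)+4\sigma^2\int e^{2\sigma r}v^2$, and the threshold $C_1<1/15$ appears when this feedback is absorbed. The extra $r$ on the left is harmless in the application because the elliptic inequality $|\Delta_\Sigma u|^2\le C^2r^{-1}u^2$ exactly cancels it. So where you invoke an abstract mechanism, the paper makes one concrete choice of weight that turns the whole computation into integration by parts.
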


Certain results for Euclidean spaces have been obtained in previous researches by Carleman estimates. For example, Koch and Tataru completely list all results for the following question in ~\cite{KT}: for what $V,W_1,W_2$, if the solution $u$ to divergent form elliptic equation

$$\partial_i(g^{ij}\partial_ju)=Vu+W_1\nabla u+\nabla(W_2u)$$

satisfies the decay at $x_0\in\mathbb{R}^n$:

$$\int_{B_r(x_0)}\vert u\vert^2\leq C_Nr^N$$

holds for all $r$ less than some fixed $R$ and any $N>0$, then $u$ is identically $0$ near $x_0$. Also see ~\cite{K} for many classical results.

The main issue for generalizing unique continuation to immersed surface is that the geometry of the immersion is involved in. Then some geometric quantities of the immersed surface would appear in the Carleman estimates we get. As a result, our estimates is applicable to those immersed surfaces with some geometric restraints.

\subsection{Applications} There are two main applications being discussed in this paper.

Firstly, the unique continuation property is valid for immersed surface with certain geometric property, which means the violation of the unique continuation property is an obstruction of such kind of immersion. In particular, we show that the surfaces without the property of unique continuation can not have an end that looks like a plane or cone. More precisely, we show the following theorem

\begin{thm}\label{app1}
Let $\Sigma$ be a $2$-dim non-compact surface with complete metric. If there exists a non-constant function $u$ defined on the part of ends of $\Sigma$ which satisfies the elliptic inequality

$$\vert\Delta_\Sigma u\vert\leq\frac{C}{d^{1/2}}\vert u\vert$$

with exponential decay on $\Sigma$, i.e.

$$\limsup_{d\to\infty}\vert e^{2\sigma d}u\vert\to0\mbox{ for any $\sigma>0$}$$

Then $\Sigma$ can not be isometrically immersed to $\mathbb{R}^n$ for any $n\geq3$, which ends are asymptotic to planes or cones in $C^3$ sense.
\end{thm}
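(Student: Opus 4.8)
The plan is to argue by contradiction against Theorem \ref{thmu}. Suppose, to the contrary, that $\Sigma$ admits an isometric immersion into some $\mathbb{R}^{n}$, $n\ge 3$, all of whose ends are asymptotic in $C^{3}$ to a plane or to a cone. Since $u$ is non-constant and decays exponentially to $0$, it cannot tend to a nonzero constant, so $u$ is not identically zero on a neighborhood of infinity in some end $E$. I will choose $R_{0}$ large enough that the image of $E\setminus B_{R_{0}}$ satisfies all four hypotheses of Theorem \ref{thmu} and that the hypotheses on $u$ pass to the form required there; then Theorem \ref{thmu} forces $u\equiv 0$ on $E\setminus B_{R_{0}}$, contradicting the choice of $E$, and hence no such immersion can exist.

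The geometric core is to verify conditions (1)--(3) for an end that is $C^{3}$-asymptotic to a cone $\mathcal C$ with vertex at the origin; the cases of a plane through the origin, or a plane not through the origin, are easier because then $A$, $H$, $\nabla H$ all vanish on the model surface. The key point for the cone is that on the exact cone the position vector is tangent, $x^{\perp}=0$, while its second fundamental form decays like $|A|=|\kappa_{g}|/r$, where $\kappa_{g}$ is the geodesic curvature of the (compact) cross-section of $\mathcal C$ in the sphere; thus $|A|\,r$ stays bounded, and $C^{3}$-closeness to $\mathcal C$ gives $|x^{\perp}|/r\to 0$ along the end. Hence $|A\cdot x|\le|A|\,|x^{\perp}|\to 0$ and $|H\cdot x|\le|H|\,|x^{\perp}|\to 0$, so that $2+H\cdot x\to 2>0$; similarly $|\nabla H|=O(r^{-2})$ makes $|\nabla H\cdot x|$ bounded, and $|x^{\top}|^{2}/r^{2}\to 1$. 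Taking $R_{0}$ large therefore secures $|A\cdot x|<1/15$, $2+H\cdot x>0$, $|\nabla H\cdot x|\le C_{2}$, and $|x^{\top}|^{2}\ge(1-\lambda)r^{2}$ for a fixed $\lambda<1$ on $E\setminus B_{R_{0}}$. Condition (4) is immediate, since a $C^{3}$-perturbation of a plane or cone has at most quadratic area growth, hence a fortiori at most exponential.

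It remains to transfer the analytic hypotheses on $u$. Because straight segments minimize in $\mathbb{R}^{n}$, the intrinsic distance dominates the extrinsic radius up to a bounded error: $d(x)\ge|x|-O(1)\ge\tfrac12 r$ for $r$ large. Consequently $\dfrac{C}{d^{1/2}}|u|\le\dfrac{C'}{r^{1/2}}|u|$, so $u$ satisfies the elliptic inequality of Theorem \ref{thmu}, and $\limsup_{d\to\infty}|e^{2\sigma d}u|=0$ for all $\sigma>0$ implies $\limsup_{r\to\infty}|e^{sr}u|=0$ for all $s>0$ (choosing $\sigma=s$ and using $sr\le 2\sigma d$); the factor $2$ in the stated decay is exactly the slack this comparison consumes. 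With all hypotheses of Theorem \ref{thmu} verified for $u$ on $E\setminus B_{R_{0}}$, we conclude $u\equiv 0$ there, contradicting that $u\not\equiv 0$ near infinity on $E$.

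The step I expect to be the main obstacle is the scale-invariant verification of hypothesis (1) near infinity for ends asymptotic to cones: one must first pin down precisely what ``$C^{3}$-asymptotic to a cone'' means (for instance, $C^{3}_{\mathrm{loc}}$ convergence of the rescaled ends, or smallness of a graphical defining function together with its first three derivatives measured in the natural scale), and then control $|A\cdot x|$ and $2+H\cdot x$ uniformly, exploiting that these quantities vanish on the exact cone only by virtue of $x$ being tangent there rather than because $|A|$ itself is small, so that matching the rates $|x^{\perp}|=o(r)$ and $|A|=O(r^{-1})$ is essential. Bounding $|\nabla H\cdot x|$ and checking the distance comparison are routine by comparison.
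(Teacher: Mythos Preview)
Your approach is correct and matches the paper's: deduce the obstruction by contradiction from Theorem~\ref{thmu} (via the intermediate unique continuation statement for surfaces whose ends are asymptotic to planes or cones), the two ingredients being (i) verification of conditions (1)--(4) of Theorem~\ref{thmu} for such ends and (ii) comparison of intrinsic and extrinsic distance to pass the hypotheses on $u$ from $d$ to $r$. Your write-up is in fact more detailed than the paper's, which simply asserts that the hypotheses of Theorem~\ref{thmu} hold and that $d$ and $r$ are comparable; your flagged ``main obstacle'' (making precise the $C^{3}$ asymptotic condition so that the scale-invariant bounds on $|A\cdot x|$, $H\cdot x$, and $|\nabla H\cdot x|$ follow) is exactly the point the paper leaves implicit.
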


Secondly, we show the rigidity of some special surfaces in Euclidean space, especially minimal surfaces. We have already presented two such results at the beginning of this section.

The idea of these rigidity results is that many special surfaces in Euclidean space have some geometric quantities which are harmonic functions or solutions to elliptic inequalities. See ~\cite{CM} for minimal surface case and ~\cite{CM3} for self shrinker case. The unique continuation of those geometric quantities would give us rigidity of the surface.

\subsection{Arrangement of the Paper}Here is the arrangement of this paper. In section 2, we develop the Carleman inequality for functions on surface in $\mathbb{R}^n$. In section 3, we use Carleman estimates which we obtained in section 2 to show the unique continuation property for functions on immersed surface. We can obtain the unique continuation property not only for harmonic functions, but also for the solutions to certain elliptic inequalities. Finally, we discuss the application of the unique continuation property we obtained in this paper in section 4 .

\subsection{Acknowledgement}

The author wants to thank Professor William Minicozzi and Professor David Jerison for their inspiring and helpful comments; and thanks Zhichao Wang and Jonathan Zhu for very helpful discussion with them.

\section{Carleman Inequality for Functions on Surface in $\mathbb{R}^n$}

Let $\Sigma=\Sigma^k$ be a $k$-dimensional immersed submanifold in $\mathbb{R}^n$. We will use $\nabla_\Sigma,\di_\Sigma,\Delta_\Sigma$ to denote the intrinsic differential operators on $\Sigma$. For $p\in\Sigma$ and any vector $V\in T_p\mathbb{R}^n$, we will use $V^\top$ and $V^\bot$ to denote the tangential part and the normal part of $V$ respectively.

$H$ and $A$ will be used to denote the mean curvature vector and the second fundamental forms on $\Sigma$. We will always use $e_1,\cdots,e_k$ to represent a local orthonormal basis over $\Sigma$. With such an orthonormal frame, we can write $H=\sum_{i=1}^k\nabla_{e_i}e_i$, and $A=(A_{ij})=(\nabla_{e_i}e_j)$. We will use $\vert A\cdot x\vert^2$ to denote $4\vert\sum_{i,j=1}^k\langle x,e_j\rangle\langle x,\nabla_{e_i}e_j\rangle\vert^2$

We will use $x$ to denote the position vector in $\mathbb{R}^n$, view it as a vector in $T_x\mathbb{R}^n$, and the distance function $r=r(x)$ is defined to be the Euclidean distance from $x$ to $0$. We have the following computations:

$$\nabla_\Sigma r=\frac{x^\top}{r}$$

$$\di_\Sigma x=k$$

$$\di_\Sigma x^\top=\di_\Sigma (x-x^\bot)=k-\di_\Sigma(x^\bot)=k+H\cdot x$$

$$\Delta_\Sigma r=\di_\Sigma (\frac{x^\top}{r})=\frac{k+H\cdot x}{r}-\frac{\vert x^\top\vert^2}{r^3}$$

$$\nabla_\Sigma\vert x^\top\vert^2=2x^\top+T$$

Here $T$ equals $\sum_{i,j=1}^k2\langle x,e_j\rangle\langle x,\nabla_{e_i}e_j\rangle e_i$ in local orthonormal basis. By direct computation we have

$$\nabla_\Sigma\vert x^\top\vert^2\cdot x=A(x^\top,x^\top)\cdot x$$



Now we state our Carleman estimates.

\begin{thm}
\label{thm1}
Let $\Sigma=\Sigma^2$ is a $2$-dimensional submanifold in $\mathbb{R}^n$, moreover suppose there are constants $R_0,\lambda,C_1,C_2$ such that over the part of $\Sigma$ outside $B_{R_0}$, condition (1),(2),(3) of Theorem \ref{thmu} hold.

Then for any smooth function $v$ compactly supported on $\Sigma\setminus B_{R_0}$, the following inequality

\begin{equation}
\int re^{2\sigma r}(\Delta_\Sigma v)^2\geq C\sigma^2\int e^{2\sigma r}v^2
\end{equation}

holds for $\sigma>\sigma_0$, where $\sigma_0$ only depends on $C_1,C_2,\lambda,R_0$. Here the integral is taken over $\Sigma$, $C=C(C_1,C_2,\lambda,R_0)$ is a constant independent of $\sigma$.
\end{thm}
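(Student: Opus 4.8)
The plan is to prove the Carleman estimate by the classical conjugation method: set $w = e^{\sigma r} v$, rewrite the weighted operator $e^{\sigma r}\Delta_\Sigma(e^{-\sigma r}\,\cdot\,)$ acting on $w$, split it into a symmetric part $S$ and an antisymmetric part $\mathcal{A}$, and then exploit the positivity of the commutator $[S,\mathcal{A}]$. Concretely, since $\Delta_\Sigma(e^{-\sigma r}w) = e^{-\sigma r}\bigl(\Delta_\Sigma w - 2\sigma\nabla_\Sigma r\cdot\nabla_\Sigma w + (\sigma^2|\nabla_\Sigma r|^2 - \sigma\Delta_\Sigma r)w\bigr)$, and using $|\nabla_\Sigma r|^2 = |x^\top|^2/r^2$ together with the formula $\Delta_\Sigma r = (k+H\cdot x)/r - |x^\top|^2/r^3$ from the computations above, I would write the conjugated operator as $L_\sigma w = \Delta_\Sigma w + \sigma^2 |x^\top|^2 r^{-2} w + (\text{lower order in }\sigma)$. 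I then want to estimate $\int r e^{2\sigma r}(\Delta_\Sigma v)^2 = \int r (L_\sigma w)^2$ from below. Expanding the square and integrating by parts, the main positive term will come from the cross term between $\Delta_\Sigma w$ and the zeroth-order $\sigma^2$-term, which after integration by parts produces (modulo the geometric error terms) something like $2\sigma^2\int w^2$ plus a Rellich–Pohozaev type expression involving $\nabla_\Sigma r\cdot\nabla_\Sigma(r|x^\top|^2 r^{-2})$.

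The key steps, in order, would be: (i) perform the conjugation and record $L_\sigma w$ precisely, keeping careful track of which error terms are $O(1)$, $O(\sigma)$, or $O(\sigma^2 C_1)$ using hypotheses (1)–(3); (ii) write $\int r(L_\sigma w)^2 \ge 2\int r\,(\Delta_\Sigma w)\bigl(\sigma^2 |\nabla_\Sigma r|^2 w + \text{l.o.t.}\bigr) + \int r\bigl(\sigma^2|\nabla_\Sigma r|^2 w\bigr)^2 + (\text{cross terms})$, dropping the manifestly nonnegative $(\Delta_\Sigma w)^2$ term; (iii) integrate the cross term $2\sigma^2\int r|\nabla_\Sigma r|^2 w\,\Delta_\Sigma w$ by parts twice — once to move a derivative off $\Delta_\Sigma w$, producing $-2\sigma^2\int \nabla_\Sigma(r|\nabla_\Sigma r|^2 w)\cdot\nabla_\Sigma w$, then a further integration by parts on the $w\nabla_\Sigma w = \tfrac12\nabla_\Sigma(w^2)$ piece — to extract a clean multiple of $\sigma^2\int \varphi\, w^2$ where $\varphi = \tfrac12\Delta_\Sigma(r|\nabla_\Sigma r|^2)$ or similar; (iv) compute $r|\nabla_\Sigma r|^2 = |x^\top|^2/r$ and its Laplacian using $\nabla_\Sigma|x^\top|^2 = 2x^\top + T$, $\nabla_\Sigma|x^\top|^2\cdot x = A(x^\top,x^\top)\cdot x$, and the divergence formulas, and show that under $|A\cdot x|\le C_1 < 1/15$ and the tilting bound $|x^\top|^2\ge(1-\lambda)r^2$ this quantity is bounded below by a positive constant; (v) absorb all the lower-order-in-$\sigma$ terms and the geometric errors (those carrying factors of $C_1$, $C_2$, or $1/R_0$) into the main term by taking $\sigma \ge \sigma_0(C_1,C_2,\lambda,R_0)$ large, and conclude $\int r(L_\sigma w)^2 \ge C\sigma^2\int w^2 = C\sigma^2\int e^{2\sigma r}v^2$.

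I expect the main obstacle to be step (iv)–(v): controlling the geometric error terms. Unlike the flat Euclidean case, the integration by parts generates terms involving $H\cdot x$, $\nabla_\Sigma H\cdot x$, and $A(x^\top,x^\top)\cdot x$, and the borderline numerical constant $1/15$ in hypothesis (1) strongly suggests the positivity of the effective potential $\varphi$ is delicate and requires tracking every constant rather than using soft estimates. The condition $2 + H\cdot x > 0$ will be needed precisely to keep the leading coefficient of $\sigma^2\int w^2$ positive (it enters through $\Delta_\Sigma r$ and $\operatorname{div}_\Sigma x^\top = k + H\cdot x = 2 + H\cdot x$), while the tilting condition (3) prevents $|\nabla_\Sigma r|^2$ from degenerating. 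A secondary technical point is justifying all integrations by parts: this is fine since $v$ is compactly supported in $\Sigma\setminus B_{R_0}$, so no boundary terms appear. The exponential area growth hypothesis (4) of Theorem \ref{thmu} is not needed here — it only enters later, in the passage from the Carleman estimate to unique continuation — so I would not invoke it in this proof.
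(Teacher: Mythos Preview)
Your conjugation setup $w=e^{\sigma r}v$ and $L_\sigma w=\Delta_\Sigma w-2\sigma\nabla_\Sigma r\cdot\nabla_\Sigma w+(\sigma^2|\nabla_\Sigma r|^2-\sigma\Delta_\Sigma r)w$ matches the paper exactly, and your instinct that the delicate point is tracking the geometric constants is right. But step~(iii) points at the wrong cross term. Write $a=\Delta_\Sigma w$, $b=2\sigma\nabla_\Sigma r\cdot\nabla_\Sigma w$, $c=(\sigma^2|\nabla_\Sigma r|^2-\sigma\Delta_\Sigma r)w$. The $ac$ term you highlight, $2\sigma^2\int r|\nabla_\Sigma r|^2 w\,\Delta_\Sigma w$, after your two integrations by parts becomes $\sigma^2\int w^2\,\Delta_\Sigma(|x^\top|^2/r)-2\sigma^2\int r|\nabla_\Sigma r|^2|\nabla_\Sigma w|^2$; the first summand is only $O(\sigma^2/r)\int w^2$ (check the flat case $\Delta r=1/r$), too weak by a factor of $r$, and the second is a large negative gradient term with $O(1)$ coefficient that you have no mechanism to absorb. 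The paper instead uses the elementary inequality $(a-b+c)^2\ge-4ab-4bc$, and the dominant positivity comes from the $bc$ piece: one integration by parts turns $\int r(-4bc)$ into $2\sigma\int w^2\,\di_\Sigma\bigl[r(\sigma^2|\nabla_\Sigma r|^2-\sigma\Delta_\Sigma r)\nabla_\Sigma r\bigr]$, whose leading $\sigma^3$ coefficient is bounded below by $(2+H\cdot x-C_1)\,|x^\top|^2/r^2$. This divergence, not your $\varphi=\tfrac12\Delta_\Sigma(r|\nabla_\Sigma r|^2)$, is the Rellich--Pohozaev quantity where the conditions $2+H\cdot x>0$, $|A\cdot x|\le C_1$, and the tilting bound actually bite.

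There is also a missing ingredient you should anticipate. The $ab$ cross term, after integration by parts, leaves a residual $-2\sigma\int|\nabla_\Sigma w|^2(H\cdot x+2|A\cdot x|)$; to close the estimate one must convert $|\nabla_\Sigma w|^2$ back into $v^2$ and $(\Delta_\Sigma v)^2$. The paper does this by substituting $\nabla_\Sigma w=e^{\sigma r}(\sigma v\,\nabla_\Sigma r+\nabla_\Sigma v)$ and then invoking a separate reverse Poincar\'e lemma,
\[
\int e^{2\sigma r}|\nabla_\Sigma v|^2\ \le\ \int e^{2\sigma r}v^2+\int e^{2\sigma r}(\Delta_\Sigma v)^2+4\sigma^2\int e^{2\sigma r}\frac{|x^\top|^2}{r^2}v^2.
\]
The resulting $(\Delta_\Sigma v)^2$ contribution (with small coefficient $\sim C_1\sigma$) is moved to the left-hand side, which is why the final bound is $\sigma^2$ rather than $\sigma^3$; the remaining $v^2$ contribution carries a factor of order $C_1\sigma^3$ and is beaten by the main $\sigma^3$ term precisely when $C_1<1/15$. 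So the numerical threshold arises from balancing these two $\sigma^3$ coefficients after the reverse Poincar\'e step, not from positivity of $\Delta_\Sigma(r|\nabla_\Sigma r|^2)$ as your plan suggests.
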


\begin{proof}[Proof of Theorem \ref{thm1}]

Let $v=e^{-\sigma r}w$. We have the following equalities

$$\nabla_\Sigma v=e^{-\sigma r}(\nabla_\Sigma w-\sigma w\nabla_\Sigma r)$$

$$\Delta_\Sigma v=e^{-\sigma r}(\Delta_\Sigma w-2\sigma\nabla_\Sigma w\cdot\nabla_\Sigma r+\sigma^2w\vert\nabla_\Sigma r\vert^2-\sigma w\Delta_\Sigma r)$$

From now on, all the integrals, without specifying the integral domain, are integrals over $\Sigma$. Integrate both sides by multiplying some functions, we have

$$\int re^{2\sigma r}(\Delta_\Sigma v)^2=\int r(\Delta_\Sigma w-2\sigma\nabla_\Sigma w\cdot\nabla_\Sigma r+\sigma^2w\vert\nabla_\Sigma r\vert^2-\sigma w\Delta_\Sigma r)^2$$

$$\geq -4\sigma\int r\nabla_\Sigma w\cdot\nabla_\Sigma r\Delta_\Sigma w-4\sigma\int r\nabla_\Sigma w\cdot\nabla_\Sigma rw(\sigma^2\vert\nabla_\Sigma r\vert^2-\sigma \Delta_\Sigma r)$$

Here we use the fundamental inequality $(a-b+c)^2\geq-4ab-4bc$. Integration by part, we have

$$=4\sigma\int \nabla_\Sigma(r\nabla_\Sigma w\cdot\nabla_\Sigma r)\cdot\nabla_\Sigma w+2\sigma\int w^2\di_\Sigma(r(\sigma^2\vert\nabla_\Sigma r\vert^2-\sigma\Delta_\Sigma r)\nabla_\Sigma r)=I+II$$

Now we are going to analyze each parts of the integrals. For the first integral,

$$I=4\sigma \int (\nabla_\Sigma w\cdot\nabla_\Sigma r)^2+4\sigma\int r\partial_j\partial_iw\partial_ir\partial_j w+4\sigma\int r\partial_iw\partial_jw\partial_i\partial_jr$$

Here we choose the local orthonormal basis, and Einstein notation to sum up all replicate indexes. $\partial$ should be understood as covariant derivative over $\Sigma$. Note since we choose the local orthonormal frame, we have $e_ie_j-e_je_i=(\nabla_{e_i}e_j)^\top=0$ on $\Sigma$, which means we can switch the position of $\partial_i$ and $\partial_j$. So we have

$$=4\sigma\int(\nabla_\Sigma w\cdot\nabla_\Sigma r)^2+2\sigma\int r\nabla_\Sigma(\vert\nabla_\Sigma w\vert^2)\cdot\nabla_\Sigma r+4\sigma\int r\partial_iw\partial_jw\partial_i\partial_jr$$

Integrate by parts again, we have

$$=4\sigma\int(\nabla_\Sigma w\cdot\nabla_\Sigma r)^2-2\sigma\int \vert\nabla_\Sigma w\vert^2(\vert\nabla_\Sigma r\vert^2+r\Delta_\Sigma r)+4\sigma\int r\partial_i w\partial_j w\partial_i\partial_j r$$

Plug in all the computation of differential or $r$ before the theorem, we have

$$=4\sigma\int(\nabla_\Sigma w\cdot x)^2\frac{1}{r^2}-2\sigma\int\vert\nabla_\Sigma w\vert^2(2+H\cdot x)+4\sigma\int r\partial_iw\partial_jw(\frac{\delta_{ij}}{r}+\frac{\langle x,\nabla_{e_j}e_i\rangle}{r}-\frac{x_ix_j}{r^3})$$

$$=-2\sigma\int\vert\nabla_\Sigma w\vert^2(H\cdot x)+4\sigma\int \partial_i w\partial_j w x\cdot A_{ij}$$

\begin{equation}
\geq -2\sigma\int\vert\nabla_\Sigma w\vert^2(H\cdot x+2\vert x\cdot A\vert)
\end{equation}

By $H\cdot x+2\vert x\cdot A\vert\geq 0$

$$\geq -2\sigma\int\vert\nabla_\Sigma w\vert^2(H\cdot x+2\vert x\cdot A\vert)=-2\sigma\int e^{2\sigma r}\vert v\sigma\nabla_\Sigma r+\nabla_\Sigma v\vert^2(H\cdot x+2\vert x\cdot A\vert)$$

$$\geq -4\sigma\int e^{2\sigma r}\sigma^2 \frac{\vert x^\top\vert^2}{r}v^2(H\cdot x+2\vert x\cdot A\vert)-4\sigma\int e^{2\sigma r}\vert\nabla_\Sigma v\vert^2(H\cdot x+2\vert x\cdot A\vert)$$

In order to throw away the gradient term, we need a reverse Poincare inequality as follows:

\begin{lem}
In the above setting, we have

\begin{equation}
\int e^{2\sigma r}\vert\nabla_\Sigma v\vert^2\leq \int e^{2\sigma r}v^2+\int e^{2\sigma r}(\Delta_\Sigma v)^2+4\sigma^2\int e^{2\sigma r}\frac{\vert x^\top\vert^2}{r^2}v^2
\end{equation}

\end{lem}

\begin{proof}[proof of lemma]

Integration by parts we have

$$\int e^{2\sigma r}v\Delta_\Sigma v=\int\nabla_\Sigma(e^{2\sigma r}v)\nabla_\Sigma v=\int e^{2\sigma r}\vert\nabla_\Sigma v\vert^2+2\sigma\int e^{2\sigma r}v\frac{x^\top}{r}\nabla_\Sigma v$$

So we have

$$\int e^{2\sigma r}\vert\nabla_\Sigma v\vert^2\leq \int e^{2\sigma r}\vert v\vert\vert\Delta_\Sigma v\vert+2\sigma\int e^{2\sigma r}\frac{\vert x^\top\vert}{r}\vert v\vert\vert\nabla_\Sigma v\vert$$

By Cauchy-Schwartz inequality, we have the first on RHS

$$\int e^{2\sigma r}\vert v\vert\vert\Delta_\Sigma v\vert\leq \frac{1}{2}\int e^{2\sigma r}v^2+\frac{1}{2}\int e^{2\sigma r}(\Delta_\Sigma v)^2$$

and the second term on RHS

$$2\sigma\int e^{2\sigma r}\frac{\vert x^\top\vert}{r}\vert v\vert\vert\nabla_\Sigma v\vert\leq 2\sigma^2\int e^{2\sigma r}\frac{\vert x^\top\vert^2}{r^2}v^2+\frac{1}{2}\int e^{2\sigma r}\vert\nabla_\Sigma v\vert^2$$

Then we get

$$\int e^{2\sigma r}\vert\nabla_\Sigma\vert^2\leq \int e^{2\sigma r}v^2+\int e^{2\sigma r}(\Delta_\Sigma v)^2+4\sigma^2\int e^{2\sigma r}\frac{\vert x^\top\vert^2}{r^2}v^2$$

\end{proof}

Back to the proof of main inequality. Let $S=(H\cdot x+2\vert x\cdot A\vert)$, then we have

$$II\geq -4\sigma^3\int e^{2\sigma r}\frac{\vert x^\top\vert^2}{r}v^2S-4\sigma\int e^{2\sigma r}v^2S-4\sigma\int e^{2\sigma r}(\Delta_\Sigma v)^2S-16\sigma^3\int e^{2\sigma r}\frac{\vert x^\top\vert^2}{r^2}v^2S$$

\begin{equation}
II\geq -20\sigma^3\int e^{2\sigma r}\frac{\vert x^\top\vert^2}{r^2}v^2S-4\sigma\int e^{2\sigma r}v^2S-4\sigma\int e^{2\sigma r}(\Delta_\Sigma v)^2S
\end{equation}

For the second term $II$, we have

$$II=2\sigma\int w^2\di_\Sigma\left(r(\sigma^2\vert\nabla_\Sigma r\vert^2-\sigma\Delta_\Sigma r)\nabla_\Sigma r\right)=2\sigma\int w^2\di_\Sigma\left((\sigma^2\frac{\vert x^\top\vert^2}{r}-\sigma(2+H\cdot x-\frac{\vert x^\top\vert^2}{r^2}))\frac{x^\top}{r}\right)$$

Now we compute the divergence in a local orthonormal basis, we have

$$\di_\Sigma\left((\sigma^2\frac{\vert x^\top\vert^2}{r}-\sigma(2+H\cdot x-\frac{\vert x^\top\vert^2}{r^2}))\frac{x^\top}{r}\right)=\partial_i\left((\sigma^2\frac{\vert x^\top\vert^2}{r}-\sigma(2+H\cdot x-\frac{\vert x^\top\vert^2}{r^2}))\frac{\langle e_i,x\rangle}{r}\right)$$

$$=\left(\sigma^2(\frac{(2x^\top+T)_i}{r}-\frac{\vert x^\top\vert^2x_i}{r^3})-\sigma \nabla_{e_i}H\cdot x+\sigma (\frac{(2x^\top+T)_i}{r^2}-2\frac{\vert x^\top\vert^2x_i}{r^4})\right)\frac{x_i}{r}$$

$$+\left(\sigma^2\frac{\vert x^\top\vert^2}{r}-\sigma(2+H\cdot x-\frac{\vert x^\top\vert^2}{r^2})\right)(\frac{\langle\nabla_{e_i}e_i,x\rangle+\langle e_i,e_i\rangle}{r}-\frac{x_ix_i}{r^3})$$

$$=\sigma^2(2\frac{\vert x^\top\vert^2}{r^2}+\frac{T\cdot x^\top}{r^2}-\frac{\vert x^\top\vert^4}{r^4})-\sigma\frac{1}{r}x_i\nabla_{e_i}H\cdot x+\sigma(2\frac{\vert x^\top\vert^2}{r^3}+\frac{T\cdot x^\top}{r^3}-2\frac{\vert x^\top\vert^4}{r^5})$$

$$+(\sigma^2\frac{\vert x^\top\vert^2}{r}-\sigma(2+H\cdot x-\frac{\vert x^\top\vert^2}{r^2}))(\frac{2+H\cdot x}{r}-\frac{\vert x^\top\vert^2}{r^3})$$

So we can write the integral of the second term as

$$II=2\sigma\int w^2(\sigma^2 A+\sigma B)$$

Where

$$A=2\frac{\vert x^\top\vert^2}{r^2}+\frac{T\cdot x^\top}{r^2}-\frac{\vert x^\top\vert^4}{r^4}+\frac{\vert x^\top\vert^2(2+H\cdot x)}{r^2}-\frac{\vert x^\top\vert^4}{r^4}\geq \frac{\vert x^\top\vert^2(2+H\cdot x)}{r^2}-\frac{C_1\cdot \vert x^\top\vert^2}{r^2}$$

$$B=-\frac{1}{r}x_i\nabla_{e_i}H\cdot x+2\frac{\vert x^\top\vert^2}{r^3}+\frac{T\cdot x^\top}{r^3}-2\frac{\vert x^\top\vert^4}{r^5}-\frac{(2+H\cdot x)^2}{r}-2\frac{\vert x^\top\vert^2(2+H\cdot x)}{r^3}+\frac{\vert x^\top\vert^4}{r^5}$$

$$\geq -\frac{1}{r}x_i\nabla_{e_i}H\cdot x+\frac{T\cdot x^\top}{r^3}-\frac{(2+H\cdot x)^2}{r}-2\frac{\vert x^\top\vert^2(2+H\cdot x)}{r^3}\geq -C(C_1,C_2,R_0)$$

Combine $I$ and $II$ together, and $\Sigma$ tilts no more than $\lambda$, we get the following inequality:

$$
(1+\sigma)C(C_1)\int re^{2\sigma r}(\Delta_\Sigma v)^2\geq(4-60C_1)(1-\lambda)^2\sigma^3\int e^{2\sigma r}v^2-C(C_1,C_2,R_0)\sigma^2\int e^{2\sigma r}v^2
$$

Then for $\sigma$ large enough, the second term on RHS will be dominant by the first term on RHS. So we finally get the inequality

\begin{equation}
\int re^{2\sigma r}(\Delta_\Sigma v)^2\geq C(C_1,C_2,R_0,\lambda)\sigma^2\int e^{2\sigma r}v^2
\end{equation}
\end{proof}

\section{Unique Continuation}

In this section we show our main Theorem \ref{thmu}. In particular, if $u$ is a harmonic function over $\Sigma$, $u$ satisfies unique continuation at infinity.

\begin{proof}[proof of Theorem \ref{thmu}]
Let us choose a cut-off function in $\mathbb{R}^n$. Suppose $\psi=\psi(r)$ is a radial symmetric cut-off function, supported on $(B_{R+1}\setminus B_{R_0+1})$, which is constant $1$ in $(B_{R}\setminus B_{R_0+2})$. Then restrict $\psi$ to $\Sigma$, still denote it by $\psi$, is a cut-off function on $\Sigma$. Note that $\vert\nabla_\Sigma\psi\vert\leq\vert\nabla\psi\vert$ and $\vert\Delta_\Sigma\psi\vert=\vert \di_\Sigma(\nabla_\Sigma v)\vert=\vert \di_\Sigma(v'\nabla_\Sigma r)\vert=\vert v''\vert\nabla_\Sigma r\vert^2+v'\Delta_\Sigma r\vert$ are all bounded by a constant $C_4$ independent of $R$. Here we use the assumption $\vert A\cdot x\vert$ is bounded by a constant $C_4$.

We apply the Carleman estimate Theorem \ref{thm1} to $\psi u$:

$$\int re^{2\sigma r}(\Delta_\Sigma (\psi u))^2\geq C\sigma^2\int e^{2\sigma r}(\psi u)^2$$

LHS is

$$\int re^{2\sigma r}(u\Delta_\Sigma\psi+2\nabla_\Sigma u\cdot\nabla_\Sigma\psi+\psi\Delta_\Sigma u)^2$$

$$\leq3\int re^{2\sigma r}u^2(\Delta_\Sigma \psi)^2+6\int re^{2\sigma r}\vert\nabla_\Sigma u\vert^2\vert\nabla_\Sigma\psi\vert^2+3\int e^{2\sigma r}\psi^2C_3^2(u^2)$$

Note for $\sigma$ large enough, $3C_3^2$ will be dominant by $\sigma^2/2$. We use $A_{s,t}$ to denote the annulus and we get the inequality:

$$C\sigma^2\int e^{2\sigma r}(\psi u)^2\leq\int_{A_{R_0+1,R_0+2}} re^{2\sigma r}u^2+\int_{A_{R,R+1}}re^{2\sigma r}u^2+\int_{A_{R_0+1,R_0+2}}re^{2\sigma r}\vert\nabla_\Sigma u\vert^2$$

$$+\int_{A_{R,R+1}}re^{2\sigma r}\vert\nabla_\Sigma u\vert^2$$

We next estimate the gradient term. Let $\eta=\eta(r)$ is another cut off function, $1$ on $A_{R,R+1}$ and $0$ outside $A_{R-1,R+2}$, then we have

$$\int_{A_{R,R+1}}re^{2\sigma r}\vert\nabla_\Sigma u\vert^2\leq\int re^{2\sigma r}\vert\nabla_\Sigma (\eta u)\vert^2$$

And by analogy of the reverse-Poincare inequality in previous section, we have

$$\int re^{2\sigma r}\vert\nabla_\Sigma (\eta^2 u)\vert^2\leq C\int_{A_{R-1,R+2}}(1+r+r^2+r^2\sigma^2)e^{2\sigma r}u^2\leq C\int_{A_{R-1,R+2}}e^{4\sigma r}u^2$$

So for $\sigma$ large enough, we have the following estimate:

$$\sigma^2\int_{A_{R_0+2,R}} e^{2\sigma r}u^2\leq C\left( \int_{A_{R_0+1,R_0+2}}re^{2\sigma r}u^2+\int_{A_{R,R+1}}re^{2\sigma r}u^2
+2\int_{A_{R_0+1,R_0+2}}re^{2\sigma r}\vert\nabla_\Sigma u\vert^2\right.
$$

$$+\left.\int_{A_{R-1,R+2}}e^{4\sigma r}u^2\right)$$

We have the following estimate for each terms on RHS:

$$\int_{A_{R_0+1,R_0+2}}re^{2\sigma r}u^2\leq (R_0+2)e^{\sigma(R_0+2)}\int_{A_{R_0+1,R_0+2}}u^2\leq Ce^{\sigma(R_0+2)}$$

$$\int_{A_{R_0+1,R_0+2}}re^{2\sigma r}\vert\nabla_\Sigma u\vert^2\leq (R_0+2)e^{\sigma(R_0+2)}\int_{A_{R_0+1,R_0+2}}\vert\nabla_\Sigma u\vert^2\leq Ce^{\sigma(R_0+2)}$$

$$\int_{A_{R,R+1}}re^{2\sigma r}u^2\leq e^{(2\sigma+1)(R+1)}\int_{A_{R,R+1}u^2}\leq C_5 e^{(2\sigma+1+t)(R+1)}\sup_{A_{R,R+1}}\vert u\vert^2$$

$$\int_{A_{R-1,R+2}}e^{4\sigma r}u^2\leq C_5e^{(4\sigma+t)(R+1)}\sup_{A_{R-1,R+2}}\vert u\vert^2$$

Combine all the terms above, we have the following estimate:

$$\sigma^2\int_{A_{R_0+2,R}}u^2\leq C+Ce^{(p\sigma+q)(R+1)}\sup_{A_{R,R+1}}\vert u\vert^2+Ce^{(p'\sigma+q')(R+1)}\sup_{A_{R-1,R+2}}\vert u\vert^2$$

For any $\epsilon>0$, let $\sigma$ so large that $C/\sigma^2\leq\epsilon$. then fixed this $\sigma$, let $R\to\infty$, we have

$$\int_{A_{R_0+2,\infty}}u^2\leq\epsilon$$

Then let $\epsilon\to0$ we have $u\equiv0$ on $\Sigma\setminus B_{R_0+2}$. Then by classical unique continuation of the solutions to elliptic equations with varying coefficients in Euclidean space (for example see ~\cite{K}), $u\equiv0$ on whole part of $\Sigma$ where it is defined.
\end{proof}

{\it Remark:} If we allow more decay of the function $u$, we can get unique continuation of solutions to some other elliptic inequality.

\section{Applications}
In this section we discuss some applications of the unique continuation theorem we obtained in the previous section.

\subsection{Unique Continuation at Infinity}

Note two dimensional subspaces $V=\mathbb{R}^2$ of $\mathbb{R}^n$ satisfying all the assumptions of unique continuation theorem in previous section, and for any such $\mathbb{R}^2$, its induced distance $r$ is exactly the intrinsic distance. Hence we have the following unique continuation theorem for $\mathbb{R}^2$:

\begin{thm}
Let smooth function $u$ defined over $\mathbb{R}^2\setminus B_{R_0}$ for some $R_0>0$ satisfying the elliptic inequality

$$\vert\Delta u\vert\leq \frac{C}{r^{1/2}}\vert u\vert$$

Then if $\limsup_{r\to\infty}\vert e^{\sigma r}u\vert=0$ for any $\sigma>0$, then $u\equiv 0$.

\end{thm}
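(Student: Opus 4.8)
The plan is to deduce this theorem as an immediate special case of Theorem \ref{thmu}, by taking $\Sigma$ to be a $2$-dimensional linear subspace $V=\mathbb{R}^2\subset\mathbb{R}^n$ and checking that all four hypotheses of that theorem hold with essentially trivial constants. So the proof is almost entirely a verification that the flat plane qualifies.

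First I would verify hypotheses (1) and (2). Since $V$ is a linear subspace through the origin it is totally geodesic, so $A\equiv 0$ and $H\equiv 0$ on $V$; hence $\vert A\cdot x\vert = 0 < 1/15$, $2+H\cdot x = 2 > 0$, and $\vert\nabla H\cdot x\vert = 0$, so (1) and (2) hold with $C_1=C_2=0$. Next, hypothesis (3): for $x\in V$ the position vector is tangent to $V$, so $x^\top = x$ and $\vert x^\top\vert^2 = r^2 = (1-\lambda)r^2$ with $\lambda = 0 < 1$. Finally, hypothesis (4): $\mathrm{Area}(V\cap B_r)=\pi r^2$, which is at most $e^{tr}$ for any fixed $t>0$ once $r$ is large, so (after possibly enlarging $R_0$) the exponential area bound holds on $V\setminus B_{R_0}$.

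Then I would observe that on $V$ the intrinsic Laplacian $\Delta_\Sigma$ coincides with the flat Laplacian $\Delta$ on $\mathbb{R}^2$, and the intrinsic distance from the origin equals the restriction of $r=\vert x\vert$ (both again because $V$, with its induced metric, is isometric to standard $\mathbb{R}^2$ and is totally geodesic). Consequently the elliptic inequality $\vert\Delta u\vert\leq \frac{C}{r^{1/2}}\vert u\vert$ is exactly the inequality required in Theorem \ref{thmu}, and the decay hypothesis $\limsup_{r\to\infty}\vert e^{\sigma r}u\vert = 0$ for all $\sigma>0$ is the same decay hypothesis appearing there. Applying Theorem \ref{thmu} to $u$ on $V\setminus B_{R_0}$ then yields $u\equiv 0$.

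The only point needing a word of care — and the closest thing to an obstacle here — is matching the region $\mathbb{R}^2\setminus B_{R_0}$ and the Euclidean distance used in the statement with the "part of $\Sigma$ outside $B_{R_0}$" and the induced distance used in Theorem \ref{thmu}; this is immediate because $V$ passes through $0$, so $B_{R_0}\cap V$ is just the Euclidean disk of radius $R_0$ and the induced metric on $V$ is the standard one. Alternatively, one could bypass Theorem \ref{thmu} and instead specialize the proof of Theorem \ref{thm1} to the flat case, where $A=H=0$ causes the curvature error terms in $I$ and the quantities $A,B$ inside $II$ to vanish or collapse, producing the model Carleman inequality directly; but invoking Theorem \ref{thmu} is cleaner and I would present it that way.
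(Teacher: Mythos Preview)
Your proposal is correct and follows exactly the paper's approach: the paper simply notes (in the sentence immediately preceding the theorem) that a two-dimensional linear subspace $V=\mathbb{R}^2\subset\mathbb{R}^n$ satisfies all the assumptions of Theorem~\ref{thmu} and that the induced distance $r$ coincides with the intrinsic distance, so the result is a direct specialization. Your write-up in fact supplies more detail than the paper gives, but the logic is identical.
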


{\it Remark:} By considering the function $\exp(-r\log(\log(\cdots\log(r))))$, we can see that for $u$ just satisfies $\vert\Delta u\vert\leq C\vert\log(\log(\cdots\log(r)))^2\vert\vert u\vert$ for $r$ large, $u$ may not satisfy unique continuation. The author conjecture that the unique continuation holds for solution $u$ to the elliptic inequality $\vert\Delta u\vert\leq C\vert u\vert$. In particular, all critical cases $u=e^{-\sigma r}$ satisfies this elliptic inequality  (where $C$ depends on $\sigma$).

If we just consider the case of harmonic functions, the exponentially decay requirement is not necessary. The polynomially decay, i.e. $\limsup_{r\to\infty}\vert r^\sigma u\vert=0$ leads to $u\equiv0$ in $\mathbb{R}^2$. The author conjecture that similar results hold for immersed surface case.

Also note that for a surface $\Sigma$ which is asymptotic to $2$-dim subspace of $\mathbb{R}^n$ at infinity, $\Sigma$ should satisfies the assumption of Theorem \ref{thmu}, with at most $r^2$ growth of the area. Hence the harmonic functions over such $\Sigma$ should also satisfies unique continuation. In fact, we can deal with surfaces with more ends.

\begin{thm}
Suppose $\Sigma$ is an immersed $2$-dim complete surface in $\mathbb{R}^n$, with finitely many ends. If every ends of $\Sigma$ are asymptotic to a plane or a cone, in the sense that it can be viewed as a graph of some function over the plane or the cone, and the function decays to $0$ as $r\to\infty$ in $C^3$ sense. Let $d$ be the intrinsic distance function on $\Sigma$ from a fixed point on $\Sigma$. Suppose function $u$ on $\sigma$ satisfies the elliptic inequality

$$\vert\Delta_\Sigma u\vert\leq\frac{C}{d^{1/2}}\vert u\vert$$

with exponentially intrinsic decay on $\Sigma$ at infinity, i.e.

$$\limsup_{d\to\infty}\vert e^{\sigma d}u\vert\to0\mbox{ for any $\sigma>0$}$$

then $u$ must be constant $0$.
\end{thm}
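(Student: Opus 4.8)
The plan is to reduce this statement to Theorem~\ref{thmu} by working on each end separately and controlling the discrepancy between the intrinsic distance $d$ and the extrinsic distance $r$. First I would fix an end $E$ of $\Sigma$; by hypothesis $E$ is, outside a compact set, a graph over a plane $P$ or a cone $\mathcal{C}$ of a vector-valued function decaying to $0$ in $C^3$. On such a graph the second fundamental form, the mean curvature, and $\nabla H$ all decay: writing the graph over $P$ (the cone case being similar after passing to the link), one gets $|A| = O(\varepsilon(r)/r)$, $|H| = O(\varepsilon(r)/r)$, $|\nabla H| = O(\varepsilon(r)/r^2)$ where $\varepsilon(r)\to 0$; consequently $|A\cdot x| = O(\varepsilon(r))$, $|\nabla H\cdot x| = O(\varepsilon(r)/r)$, and $2+H\cdot x \to 2 > 0$. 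Enlarging $R_0$ we may assume $|A\cdot x| < 1/15$ and $|\nabla H\cdot x|\le C_2$. The tilt condition $|x^\top|^2\ge(1-\lambda)r^2$ also follows since on a graph of a $C^1$-small function over $P$ (or $\mathcal{C}$) the tangent plane is nearly that of $P$ (or $\mathcal{C}$), so $x^\top$ captures almost all of $x$; for the cone one uses that a fixed cone already satisfies a tilt bound and the perturbation is small. Finally, a graph over $P$ or $\mathcal{C}$ with decaying graph function has Euclidean-quadratic area growth, hence certainly at most exponential area growth, and since $\Sigma$ has finitely many ends the whole of $\Sigma\setminus B_{R_0}$ has at most exponential area growth in $r$; this gives condition (4). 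Thus all four hypotheses of Theorem~\ref{thmu} hold on $\Sigma\setminus B_{R_0}$.

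The second ingredient is the comparison between $d$ and $r$. On each end, because the end is asymptotic to a plane or cone, the intrinsic distance $d(x)$ from the fixed basepoint and the extrinsic distance $r(x)=|x|$ satisfy $d(x) = r(x) + O(1)$, in fact $|d(x)-r(x)|\le C$ for a constant $C$ depending on the end (one integrates $|\nabla_\Sigma d|=1$ against $|\nabla_\Sigma r|=|x^\top|/r$, which is $1+O(\varepsilon)$, along a minimizing geodesic; the correction is summable). Therefore the intrinsic elliptic inequality $|\Delta_\Sigma u|\le C d^{-1/2}|u|$ implies $|\Delta_\Sigma u|\le C' r^{-1/2}|u|$ outside a compact set, and the intrinsic decay hypothesis $\limsup_{d\to\infty}|e^{\sigma d}u| = 0$ for all $\sigma>0$ implies $\limsup_{r\to\infty}|e^{\sigma r}u| = 0$ for all $\sigma>0$, since $e^{\sigma d}\ge e^{-\sigma C}e^{\sigma r}$. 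So the hypotheses of Theorem~\ref{thmu} on $u$ are met.

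Applying Theorem~\ref{thmu} on $\Sigma\setminus B_{R_0}$ yields $u\equiv 0$ on $\Sigma\setminus B_{R_0}$, i.e.\ $u$ vanishes on a neighborhood of infinity of every end. Then, since $u$ solves an elliptic inequality with locally bounded (indeed decaying) zeroth-order coefficient on the connected surface $\Sigma$, the classical strong unique continuation property for second-order elliptic inequalities (as cited in the proof of Theorem~\ref{thmu}, e.g.~\cite{K}) propagates the vanishing: $u$ vanishes to infinite order at any point where it vanishes on an open set, hence $u\equiv 0$ on all of $\Sigma$.

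The main obstacle is the geometric bookkeeping on the cone ends: verifying that a graph over a cone $\mathcal{C}$ with a $C^3$-decaying graph function actually satisfies \emph{all} of $|A\cdot x|<1/15$, $|\nabla H\cdot x|\le C_2$, and the uniform tilt bound requires care, because a cone itself is a nontrivially curved submanifold. One must first record the curvature and tilt data of the model cone (for which $|A|\sim c/r$, $H\cdot x$ is a bounded function bounded below by something greater than $-2$ provided the cone is not too sharp, and $|x^\top|^2/r^2$ is a fixed constant in $(0,1]$ determined by the cone angle), then show the graph perturbation changes these by $O(\varepsilon(r))$ in the relevant scale-invariant quantities. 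Handling the transition annulus between the ``model'' region and an arbitrary compact core of $\Sigma$, and confirming the $d\sim r$ comparison uniformly across all ends, is the part that needs genuine attention; everything else is a direct citation of Theorem~\ref{thmu} and the standard Euclidean unique continuation theorem.
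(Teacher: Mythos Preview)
Your proposal is correct and follows the same approach as the paper: verify the hypotheses of Theorem~\ref{thmu} using the asymptotic structure of the ends, and compare intrinsic to extrinsic distance to transfer the decay and elliptic-inequality hypotheses from $d$ to $r$. The paper's own proof is much terser---it simply asserts that the conditions of Theorem~\ref{thmu} hold and that the two-sided bound $C^{-1}r \le d \le Cr$ follows from the asymptotic hypothesis---so you have supplied details (the explicit check of conditions (1)--(4), the sharper $d=r+O(1)$, and the final classical unique-continuation step into the compact core) that the paper leaves implicit.
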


\begin{proof}
Since every ends of $\Sigma$ are asymptotic to a plane or a cone, we know that the assumptions in Theorem \ref{thmu} hold. The only thing left is to show that the intrinsic distance on $\Sigma$ is comparative to extrinsic distance on $\Sigma$, i.e. there is a constant $C$ such that

$$\frac{1}{C}r\leq d\leq Cr$$

This follows from the condition that the ends of $\Sigma$ are asymptotic to planes.
\end{proof}

From this result we can get an obstruction of surface embedded in Euclidean space asymptotically to planes, which is just Theorem \ref{app1} we mentioned before.

\subsection{Rigidity of Minimal Surfaces}

Unique continuation can be applied to show the rigidity of special geometric objects. If the geometric object has ends very similar to a plane or a cone, then it might coincident with the plane or the cone.

We first deal with the case of minimal surface in Euclidean space.

\begin{thm}\label{thmplane}
Suppose $\Sigma$ is an immersed minimal surface in $\mathbb{R}^n$ with exponential area growth, tilts no more than $\lambda<1$ and $\vert A\vert\leq C/r$. Then if $x_j$ is the coordinate function in $\mathbb{R}^n$ restricted to $\Sigma$ (note in previous sections $x_j$ always refers to the coordinates project to the orthonormal basis direction on $\Sigma$), with exponential decay:

$$\limsup_{r\to\infty}\vert e^{\sigma r}x_i\vert=0,\mbox{ for all $\sigma>0$}$$

Then $x_i\equiv0$.

In particular, if $\Sigma$ has finitely many ends, and one of the ends be asymptotic to a plane exponentially, in the sense that it can be viewed as a graph of a vector valued function over the plane, such that this function satisfies the exponential decay, then $\Sigma$ is just the plane.
\end{thm}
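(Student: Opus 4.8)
The plan is to reduce the statement to an application of Theorem \ref{thmu}, applied to the coordinate functions $x_i$ of the immersion. First I would check that the four hypotheses of Theorem \ref{thmu} hold for $\Sigma$ under the stated assumptions. Conditions (3) (tilting) and (4) (exponential area growth) are assumed outright. For conditions (1) and (2), since $\Sigma$ is minimal we have $H\equiv 0$, so $2+H\cdot x=2>0$ automatically, and the curvature bound $|A|\le C/r$ gives $|A\cdot x|\le 2|A|\,r\le 2C$; one must choose $R_0$ large enough (or rescale $\Sigma$, or note the bound $|A|\le C/r$ can be improved by a small constant outside a large ball) so that $|A\cdot x|<1/15$ on $\Sigma\setminus B_{R_0}$. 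Similarly $\nabla H\equiv 0$ so condition (2) holds with $C_2=0$. Thus the geometric hypotheses are met.

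Next I would verify that each coordinate function $x_i$ satisfies the elliptic inequality required in Theorem \ref{thmu}. The key classical fact is that on a minimal surface the position vector is harmonic componentwise: $\Delta_\Sigma x_i = H_i = 0$ for each $i=1,\dots,n$. Hence $|\Delta_\Sigma x_i| = 0 \le \frac{C}{r^{1/2}}|x_i|$ trivially. Combined with the assumed exponential decay $\limsup_{r\to\infty}|e^{\sigma r}x_i|=0$ for all $\sigma>0$, Theorem \ref{thmu} applies directly and yields $x_i\equiv 0$ on the part of $\Sigma$ where the decay holds, hence on all of $\Sigma$ by the classical unique continuation for the Laplacian with bounded coefficients (as in the proof of Theorem \ref{thmu}). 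This proves the first assertion.

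For the second assertion, suppose $\Sigma$ has finitely many ends and one end $E$ is asymptotic to a plane $P$ exponentially, so that $E$ is the graph over $P$ of a vector-valued function $w$ with $\limsup_{r\to\infty}|e^{\sigma r}w|=0$ for all $\sigma>0$. After an ambient rotation we may take $P=\mathbb{R}^2\times\{0\}\subset\mathbb{R}^n$, so that on $E$ the last $n-2$ coordinate functions are precisely the components of $w$, and they therefore have the required exponential decay. Applying the unique continuation conclusion to these coordinate functions (restricted to a neighborhood of the end, where all hypotheses of Theorem \ref{thmu} hold since $E$ is graphical over a plane with exponentially small graph function — in particular $|A|\le C/r$, small tilt, and at most quadratic, hence exponential, area growth) forces $x_{3}=\dots=x_{n}\equiv 0$ on $\Sigma$. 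Thus $\Sigma\subset\mathbb{R}^2\times\{0\}$, and an immersed minimal surface contained in a $2$-plane of $\mathbb{R}^n$ is (an open subset of) that plane; since $\Sigma$ is complete it is the whole plane.

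The main obstacle I anticipate is the bookkeeping in the second part: checking that the hypotheses of Theorem \ref{thmu} genuinely hold on the asymptotically planar end (the comparison between intrinsic and extrinsic distance, the decay of $|A|$, the tilt bound, and the area growth all need to be read off from the $C^3$-smallness of the graph function $w$), and arguing that vanishing of the graph coordinates on one end propagates to the whole connected surface via Euclidean unique continuation rather than only on that end. The geometric core — $\Delta_\Sigma x_i=0$ on a minimal surface — is immediate; the work is entirely in matching the asymptotic hypotheses to the conditions of Theorem \ref{thmu}.
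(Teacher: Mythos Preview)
Your proposal is correct and follows essentially the same route as the paper: verify the hypotheses of Theorem~\ref{thmu} using $H\equiv 0$ and $\nabla H\equiv 0$, then apply it to the harmonic coordinate functions $\Delta_\Sigma x_j=\langle\partial_j,H\rangle=0$, and for the second part rotate so the plane is $\{x_3=\cdots=x_n=0\}$ and kill the normal coordinates. The only cosmetic difference is that the paper closes the second part with the maximum principle while you argue directly that a complete minimal surface contained in a $2$-plane is that plane; your concern about arranging $|A\cdot x|<1/15$ from $|A|\le C/r$ is legitimate and is something the paper simply asserts without comment.
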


\begin{proof}
Minimality indicates $\nabla_\Sigma H\equiv0$, so it is easy to check $\Sigma$ satisfies the conditions in Theorem \ref{thmu}. For any surface in $\mathbb{R}^n$, we have

$$\Delta_\Sigma x_j=\langle\partial_j,H\rangle$$

Here $\partial_j$ is the direction of the coordinate in $\mathbb{R}^n$. Since $\Sigma$ is minimal, $\Delta_\Sigma x_j=0$, so by unique continuation we have $x_j\equiv0$ on $\Sigma$.

In particular, if one end of $\Sigma$ is asymptotically to a plane exponentially, we can choose a large ball $B_R$ and only consider the part of the end outside this ball, still denote it by $\Sigma$. Since it is asymptotically to a plane exponentially and $C^3$, it satisfies the assumptions in previous sections. Without loss of generality, we can assume the plane is $\{x_3=x_4=\cdots=x_n\}$. Then the minimal surface is $(x_1,x_2,x_3(x_1,x_2),\cdots,x_n(x_1,x_2))$. Now we apply the discussion in previous paragraphs to $x_3,x_4,\cdots,x_n$, they are become constant $0$. By the maximal principle $\Sigma$ is just the plane.
\end{proof}

One very special application is that

\begin{cor}
Let $\Sigma$ be a complete special Lagrangian surface in $\mathbb{R}^4$, for which one of its ends is asymptotic to a plane exponentially, then it coincides with the plane.
\end{cor}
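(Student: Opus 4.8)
The plan is to deduce the corollary from Theorem \ref{thmplane} by verifying that a complete special Lagrangian surface in $\mathbb{R}^4$ whose end is exponentially asymptotic to a plane satisfies the three structural hypotheses of that theorem: minimality, at most exponential area growth, bounded tilt $\lambda<1$, and $|A|\leq C/r$ on the relevant end. Minimality is immediate, since every special Lagrangian submanifold is calibrated and hence minimal (indeed volume-minimizing in its homology class). The tilt and area hypotheses, together with the decay $|A|\le C/r$, will follow from the $C^3$ exponential asymptotics to the plane: once we restrict to the part of the end outside a large ball $B_R$, writing the end as a graph $(x_1,x_2,x_3(x_1,x_2),\dots,x_n(x_1,x_2))$ over the plane $\{x_3=\cdots=x_n=0\}$ with graphing function $u$ satisfying $\limsup_{r\to\infty}|e^{\sigma r}u|=0$ and the analogous decay for $\nabla u$, $\nabla^2 u$, $\nabla^3 u$, one reads off that $x^\top$ differs from $x$ by an exponentially small normal part, so $|x^\top|^2\ge (1-\lambda)r^2$ for any fixed $\lambda>0$ once $R$ is large, and that the induced metric is exponentially close to the flat metric, giving at most quadratic (hence at most exponential) area growth; similarly $|A|$ is controlled by $|\nabla^2 u|$, which decays faster than any $1/r$, so certainly $|A|\le C/r$.

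Concretely, I would proceed as follows. First I would recall the definition of special Lagrangian: $\Sigma^2\subset\mathbb{R}^4\cong\mathbb{C}^2$ is Lagrangian with respect to $\omega=dx_1\wedge dx_2+dx_3\wedge dx_4$ and satisfies $\mathrm{Im}\,(dz_1\wedge dz_2)|_\Sigma=0$, equivalently $\Sigma$ is calibrated by $\mathrm{Re}\,(dz_1\wedge dz_2)$; cite Harvey--Lawson for the fact that calibrated submanifolds are minimal. Second, I would invoke the hypothesis that one end of $\Sigma$ is asymptotic to a plane exponentially in the $C^3$ sense, and pass to that end outside a large ball, exactly as in the proof of Theorem \ref{thmplane}. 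Third, I would observe that the $C^3$ exponential asymptotics furnish all the geometric bounds listed above, so the hypotheses of Theorem \ref{thmplane} are met. Fourth, I would apply Theorem \ref{thmplane} directly: the coordinate functions $x_3,\dots,x_n$ restricted to $\Sigma$ are harmonic (being minimal surface coordinates) and decay exponentially, hence vanish identically on the end, and then by the maximum principle / unique continuation for the minimal surface system $\Sigma$ coincides with the plane $\{x_3=\cdots=x_n=0\}$.

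The main obstacle, and the only genuinely substantive point, is confirming that "exponentially asymptotic to a plane in $C^3$" really delivers the precise hypotheses in the form Theorem \ref{thmu}/\ref{thm1} demands — in particular the tilt condition $|x^\top|^2\ge(1-\lambda)r^2$ with a \emph{fixed} $\lambda<1$ uniformly on the end, and the curvature bound $|A\cdot x|\le C_1<1/15$ rather than merely $|A|\le C/r$. The tilt bound is actually the easy direction: since the normal directions of the graph are spanned by vectors of the form $\partial_j - \sum_k (\partial u_k/\partial x_j)\,\partial_k$ with $|\nabla u|$ exponentially small, the angle between $T_x\Sigma$ and the model plane is exponentially small, so $|x^\top|/r \to 1$; but one must be slightly careful that the \emph{extrinsic} position vector $x$ has a component of size $|u(x)|$ in the normal plane to the model, which is still exponentially small relative to $r$, so $|x^\perp|/r\to 0$ and any $\lambda\in(0,1)$ works for $R$ large. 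The bound $|A\cdot x|\le |A|\,|x|\le (C/r)\cdot r = C$ is \emph{not} automatically below $1/15$; here one uses that $A$ decays faster than $1/r$ (indeed $|A|=O(|\nabla^2 u|)$ with $\nabla^2 u$ beating every polynomial), so $|A\cdot x|=|A|r$ is in fact $o(1)$ and hence $<1/15$ for $R$ large — this is where the full strength of "$C^3$ exponential" (as opposed to just $|A|\le C/r$) is used. Once these verifications are in place the corollary follows with no further work; I would present it in two or three lines citing Theorem \ref{thmplane}.
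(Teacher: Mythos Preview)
Your proposal is correct and follows essentially the same route as the paper: the corollary is stated there as an immediate application of Theorem \ref{thmplane} with no separate proof, the only point being that a special Lagrangian surface is calibrated and hence minimal, after which the ``in particular'' clause of Theorem \ref{thmplane} applies verbatim. Your write-up is considerably more careful than the paper's in spelling out why the $C^3$ exponential asymptotics deliver the tilt bound, the area growth, and $|A\cdot x|<1/15$, but this is exactly the verification the paper sweeps under the sentence ``Since it is asymptotically to a plane exponentially and $C^3$, it satisfies the assumptions in previous sections'' inside the proof of Theorem \ref{thmplane}; so there is no genuine divergence of method.
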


Next we restrict our discussion to surfaces in $\mathbb{R}^3$. In this case, plane is a very special case of minimal surfaces. We will develop a rigidity result for more general minimal surfaces. Here we state some results of general minimal graphs in \cite{CM} Chapter 2 section 6.

Let $\Sigma$ be any surface in $\mathbb{R}^3$, $N(x)$ be the normal vector at $x$ of $\Sigma$. Then we can define $\Sigma_u$ the graph of function $u$ over $\Sigma$ by $$\Sigma_u=\{x+u(x)N(x)\vert x\in \Sigma\}$$ Then we have following lemma for minimal graph:

\begin{lem}{(Lemma 2.26 in \cite{CM} Chapter 2 section 6)}
There is a constant $C$ so that if $\Sigma$ is a minimal surface, the graph of $u$ over $\Sigma$ is also minimal, and $\max{\vert u\vert\vert A\vert,\vert\nabla u\vert}\leq1$, then $u$satisfies the equation
\begin{equation}
\di[(I+\bar L)\nabla u]+(1+Q)\vert A\vert^2u+Q_{ij}A_{ij}=0
\end{equation}
where $\bar L\leq C(\vert u\vert\vert A\vert+\vert\nabla u\vert),\vert Q\vert\leq C(\vert u\vert\vert A\vert+\vert\nabla u\vert)^2$ and $\vert Q_{ij}\vert\leq C(\vert u\vert\vert A\vert+\vert\nabla u\vert)^2$
\end{lem}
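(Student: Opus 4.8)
The plan is to identify the equation as the Euler--Lagrange equation of the area functional of $\Sigma_u$, written in Fermi (normal) coordinates along $\Sigma$, and then to read the claimed structure off an \emph{exact} formula for the induced metric.

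\textbf{Step 1: normal coordinates.} Fix a point of $\Sigma$, choose a local orthonormal frame $e_1,e_2$ of $T\Sigma$ geodesic at that point, and let $N$ be the unit normal. Set $\Phi(x,t)=x+tN(x)$. Writing $h_{ij}=\langle\nabla_{e_i}e_j,N\rangle$ for the scalar second fundamental form, so that $\vert A\vert^2=\vert h\vert^2$ and minimality of $\Sigma$ reads $\operatorname{tr}h=0$, the Weingarten relation gives $\partial_i\Phi=e_i-t\,h_{ik}e_k$, whence the \emph{exact} identity for the pulled-back Euclidean metric
$$g_{ij}(x,t)=\delta_{ij}-2t\,h_{ij}+t^2(h^2)_{ij},\qquad\text{i.e.}\qquad g(x,t)=(I-tS)^2,\quad S=(h_{ij}).$$
Since $S$ is $2\times2$ with $\operatorname{tr}S=0$, $\det S=-\tfrac12\vert A\vert^2$, hence
$$\det g(x,t)=\big(\det(I-tS)\big)^2=\big(1+t^2\det S\big)^2=\big(1-\tfrac12t^2\vert A\vert^2\big)^2,\qquad g(x,t)^{-1}=\delta+2tS+O(t^2\vert A\vert^2).$$
The hypothesis $\vert u\vert\vert A\vert\le1$ gives $\det(I-uS)=1-\tfrac12u^2\vert A\vert^2\ge\tfrac12>0$, so $x\mapsto x+u(x)N(x)$ stays in the region where $\Phi$ is a diffeomorphism and $\Sigma_u$ really is the graph $\{t=u(x)\}$ there.

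\textbf{Step 2: the Euler--Lagrange equation.} The area of the graph $\{t=u(x)\}$ in the metric $dt^2+g_{ij}(x,t)\,dx^i dx^j$ is $\int F(x,u,\nabla u)$ with $F=\sqrt{\det g(x,u)}\,\sqrt{1+g^{ij}(x,u)\partial_iu\,\partial_ju}$, and $\Sigma_u$ minimal means $\partial_i(\partial_{p_i}F)-\partial_uF=0$. Working at the base point (where $\di_\Sigma$ of a vector field is $\partial_i$ of its components), the first term is $\di_\Sigma\big[M(x,u,\nabla u)\nabla u\big]$ with $M^{ij}=\sqrt{\det g(x,u)}\,g^{ij}(x,u)/W$, $W=\sqrt{1+\vert\nabla u\vert^2_{g(x,u)}}$; by Step 1, $M=I+\bar L$ with $\vert\bar L\vert\le C(\vert u\vert\vert A\vert+\vert\nabla u\vert^2)\le C(\vert u\vert\vert A\vert+\vert\nabla u\vert)$ on $\{\vert\nabla u\vert\le1\}$, and no derivative of $A$ is needed because $\det g$ and $g^{-1}$ at a fixed point involve only $h$ itself—all spatial derivatives are carried by the outer $\di_\Sigma$. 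For the second term split $\partial_uF=(\partial_u\sqrt{\det g(x,u)})\,W+\sqrt{\det g(x,u)}\,\partial_uW$. Since $\sqrt{\det g(x,u)}=1-\tfrac12u^2\vert A\vert^2$ we get $\partial_u\sqrt{\det g(x,u)}=-u\vert A\vert^2$ \emph{exactly}, so the first piece is $-u\vert A\vert^2(1+O(\vert\nabla u\vert^2))=-(1+Q')\vert A\vert^2u$; and $\partial_uW=\tfrac{(\partial_ug^{ij}(x,u))\partial_iu\partial_ju}{2W}$ with $\partial_ug^{ij}(x,u)=2h_{ij}+O(u\vert A\vert^2)$, so the second piece is $h_{ij}\partial_iu\partial_ju+(\text{higher order})=A_{ij}Q_{ij}'$ with $\vert Q_{ij}'\vert=O(\vert\nabla u\vert^2)$. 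Collecting signs, $\partial_uF=-(1+Q)\vert A\vert^2u-Q_{ij}A_{ij}$, and the Euler--Lagrange equation becomes
$$\di\big[(I+\bar L)\nabla u\big]+(1+Q)\vert A\vert^2u+Q_{ij}A_{ij}=0,$$
with $\vert Q\vert,\vert Q_{ij}\vert\le C(\vert u\vert\vert A\vert+\vert\nabla u\vert)^2$. Since $\bar L,Q,Q_{ij}$ are built from the invariant data $u,\nabla_\Sigma u,A$, the frame-dependent computation patches to this invariant identity on all of $\Sigma$.

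\textbf{Main obstacle.} The only real content, beyond organizing the Taylor expansion, is ensuring that the correction to the Jacobi coefficient is \emph{quadratically} small ($1+Q$ with $\vert Q\vert\lesssim(\vert u\vert\vert A\vert+\vert\nabla u\vert)^2$) rather than merely first-order small, and that no stray term of size $\vert A\vert^2\vert u\vert$ escapes into an uncontrolled remainder. This is exactly where minimality enters: it is $\operatorname{tr}S=0$ that makes $\det(I-tS)=1+t^2\det S$ have no linear term, which in turn makes $\partial_u\sqrt{\det g(x,u)}$ equal to $-u\vert A\vert^2$ with purely quadratic corrections. A lesser bookkeeping point is the claim that no $\nabla A$ appears in the coefficients; this is transparent in the variational formulation, since every spatial derivative is absorbed by the outer divergence $\di_\Sigma$, so one never differentiates the metric coefficients $g_{ij}(x,t)$ in $x$. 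No Codazzi or Simons identity is required: in $\mathbb{R}^3$ every curvature quantity reduces to the shape operator $S$.
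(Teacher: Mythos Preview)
The paper does not give its own proof of this lemma; it is simply quoted from Colding--Minicozzi's textbook \cite{CM} (Lemma~2.26, Chapter~2, \S6) and used as a black box to derive the subsequent corollary. So there is nothing in the paper to compare your argument against.

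That said, your variational derivation is correct and is essentially the standard one. The key points---that in Fermi coordinates the ambient metric is exactly $dt^2+g_{ij}(x,t)\,dx^idx^j$ with $g(x,t)=(I-tS)^2$, that minimality $\operatorname{tr}S=0$ forces $\det(I-tS)=1-\tfrac12 t^2\vert A\vert^2$ with no linear term, and hence $\partial_u\sqrt{\det g(x,u)}=-u\vert A\vert^2$ \emph{exactly}---are precisely what drive the quadratic smallness of $Q$ and $Q_{ij}$, and you have identified this correctly. The bookkeeping in Step~2 (reading off $M=I+\bar L$ from $\sqrt{\det g}\,g^{-1}/W$ and splitting $\partial_uF$ into the Jacobi term and the $A_{ij}Q_{ij}$ remainder) is sound. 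Colding--Minicozzi organize the computation slightly differently, computing the mean curvature of $\Sigma_u$ directly rather than passing through the Euler--Lagrange equation of the area functional, but the substance---the same expansion of the same metric $(I-tS)^2$---is identical.
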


\begin{cor}
Under the assumption in previous lemma, $u$ satisfies the following equation
\begin{equation}
\Delta u+\vert A\vert^2u=\tilde Q(\vert u\vert\vert A\vert,\nabla u)
\end{equation}
where $\tilde Q(x,y)\leq C(\vert x\vert^2+\vert y\vert^2)$.
\end{cor}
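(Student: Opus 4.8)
The plan is to obtain this as an immediate algebraic consequence of Lemma 2.26 together with the size bounds on $\bar L$, $Q$, $Q_{ij}$ recorded there. First I would split off the leading operator, $\di[(I+\bar L)\nabla u]=\Delta u+\di[\bar L\nabla u]$, substitute into the equation of the lemma, and move $\Delta u$ and $\vert A\vert^2 u$ to the left-hand side, obtaining
\[
\Delta u+\vert A\vert^2 u=-\di[\bar L\nabla u]-Q\vert A\vert^2 u-Q_{ij}A_{ij}.
\]
I would then simply \emph{define} $\tilde Q$ to be this right-hand side; what remains is to verify the quadratic bound $\vert\tilde Q\vert\le C(\vert u\vert^2\vert A\vert^2+\vert\nabla u\vert^2)$.

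For that bound the point is that each of the three terms on the right is a product containing at least two of the ``small'' quantities $\bar L$, $Q$, $Q_{ij}$, which by the lemma are $O(\vert u\vert\vert A\vert+\vert\nabla u\vert)$ or $O((\vert u\vert\vert A\vert+\vert\nabla u\vert)^2)$, and one uses $\max\{\vert u\vert\vert A\vert,\vert\nabla u\vert\}\le1$ to absorb the remaining powers. Explicitly: $\vert\bar L\nabla u\vert\le C(\vert u\vert\vert A\vert+\vert\nabla u\vert)\vert\nabla u\vert\le C(\vert u\vert^2\vert A\vert^2+\vert\nabla u\vert^2)$ since $\vert\nabla u\vert\le\vert u\vert\vert A\vert+\vert\nabla u\vert$; for $Q\vert A\vert^2u$ one writes $\vert A\vert^2\vert u\vert=\vert A\vert\,(\vert A\vert\vert u\vert)\le\vert A\vert$, which is bounded in the regimes where the corollary is used (e.g.\ $\vert A\vert\le C/r^{1/4}$ in Theorem \ref{thmr}), so $\vert Q\vert A\vert^2u\vert\le C(\vert u\vert\vert A\vert+\vert\nabla u\vert)^2$; and $\vert Q_{ij}A_{ij}\vert\le C(\vert u\vert\vert A\vert+\vert\nabla u\vert)^2\vert A\vert$ is treated the same way.

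The one step that requires real care, and the main obstacle, is the term $\di[\bar L\nabla u]$. Computed pointwise it equals $\bar L_{ij}\,\partial_i\partial_j u+(\partial_i\bar L_{ij})\,\partial_j u$, so it also brings in $\nabla^2 u$ and, through the dependence of $\bar L$ on the geometry of $\Sigma$, the curvature derivative $\nabla A$ --- quantities not controlled by $\vert u\vert\vert A\vert$ and $\vert\nabla u\vert$ alone. The honest reading of the conclusion is therefore that $\tilde Q$ is a schematic nonlinearity, a sum of terms each carrying two small factors, and in the situations where the corollary is applied one has the additional $C^2$ and curvature bounds needed to promote this to the stated pointwise inequality. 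So the substance of the argument is bookkeeping this divergence term; the rest is a one-line substitution into Lemma 2.26.
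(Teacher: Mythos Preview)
The paper states this corollary without proof, so there is nothing to compare against; your derivation by expanding $\di[(I+\bar L)\nabla u]$ and moving $\Delta u+\vert A\vert^2u$ to the left is exactly the intended one-line rearrangement.

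Your reservation about $\di[\bar L\nabla u]$ is well taken and is a genuine imprecision in the statement, not a defect in your argument. As you say, $\di[\bar L\nabla u]=\bar L_{ij}\partial_i\partial_j u+(\partial_i\bar L_{ij})\partial_j u$, and since $\bar L$ depends on $u$, $\nabla u$, and $A$, its divergence introduces $\nabla^2 u$ and $\nabla A$. These cannot be bounded by $\vert u\vert\vert A\vert$ and $\vert\nabla u\vert$ alone, so the notation $\tilde Q(\vert u\vert\vert A\vert,\nabla u)$ with the stated quadratic bound should indeed be read schematically: $\tilde Q$ is a sum of terms each carrying at least two small factors from $\{\vert u\vert\vert A\vert,\vert\nabla u\vert\}$, multiplied by quantities ($\vert A\vert$, $\vert\nabla A\vert$, $\vert\nabla^2 u\vert$) that are bounded in the applications. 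In the proof of Theorem~\ref{thmr} this is harmless because the hypotheses there (decay of $u$, $\vert\nabla u\vert\le C\vert u\vert^{1/2}/r^{1/4}$, $\vert A\vert\le C/r^{1/4}$, together with the elliptic estimates that come with a minimal graph) supply the missing control. Your diagnosis of what the corollary really asserts, and what it costs to use it, is accurate.
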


Now we get an equation for general minimal graph. Apply unique continuation theorem we get the following rigidity of ends of general minimal surface:

\begin{thm}\label{thmr}
Suppose $\Sigma$ is a minimal surface in $\mathbb{R}^3$ with at most exponential area growth and tilts no more than $\lambda<1$. Moreover we assume $\vert A\vert\leq C/r^{1/4}$ for some constant $C$. Then if $\Sigma'$ is a minimal surface, which asymptotic to a same end with $\Sigma$ in the following sense: $\Sigma'$ can be viewed as a graph $\Sigma_u$ of function $u$ over $\Sigma$, and for all $\sigma >0$ $$\limsup_{r\to\infty}\vert e^{\sigma r}u\vert=0,\vert\nabla u\vert \leq C\frac{\vert u\vert^{1/2}}{r^{1/4}}$$
Then $\Sigma'$ must coincide with $\Sigma$.
\end{thm}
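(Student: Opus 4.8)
The plan is to show $u\equiv 0$ by applying the unique continuation Theorem \ref{thmu} to the graph function $u$, and then to deduce $\Sigma'=\Sigma$ from the analyticity of minimal surfaces. I would first restrict attention to the end $\Sigma\setminus B_{R_0}$ with $R_0$ large. Since $|A|\leq C/r^{1/4}\to 0$, $|u|\to 0$, and $|\nabla u|\leq C|u|^{1/2}/r^{1/4}\to 0$, on this end $\max\{|u||A|,|\nabla u|\}\leq 1$, so Lemma 2.26 of \cite{CM} and the Corollary following it apply and give
$$\Delta_\Sigma u+|A|^2u=\tilde Q(|u||A|,\nabla u),\qquad |\tilde Q(x,y)|\leq C(|x|^2+|y|^2).$$

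Next I would convert this into the elliptic inequality required by Theorem \ref{thmu}:
$$|\Delta_\Sigma u|\leq |A|^2|u|+|\tilde Q|\leq |A|^2|u|+C\left(|A|^2u^2+|\nabla u|^2\right).$$
Using $|A|^2\leq C/r^{1/2}$ and $|\nabla u|^2\leq C|u|/r^{1/2}$, together with $|u|\leq 1$ on the end so that $u^2\leq |u|$, every term on the right is bounded by $\tfrac{C}{r^{1/2}}|u|$, hence $|\Delta_\Sigma u|\leq \tfrac{C}{r^{1/2}}|u|$ on $\Sigma\setminus B_{R_0}$. I expect this absorption step to be the crux of the argument: the exponent $\tfrac14$ in the hypotheses on $|A|$ and on $|\nabla u|$ is precisely calibrated so that \emph{every} nonlinear term — in particular the quadratic gradient term $|\nabla u|^2$ and the curvature term $|A|^2u^2$ — lands in exactly the class $\tfrac{C}{r^{1/2}}|u|$ for which the Carleman estimate Theorem \ref{thm1} was proved; making these bookkeeping bounds fit, and ruling out anything worse than $r^{-1/2}|u|$, is the heart of the matter.

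It remains to check the geometric hypotheses (1)--(4) of Theorem \ref{thmu} on this end. Minimality gives $H\equiv 0$, hence $2+H\cdot x=2>0$ (the second half of (1)) and $\nabla H\equiv 0$ (so (2) holds with $C_2=0$); condition (3) (tilting $\leq\lambda$) and condition (4) (exponential area growth) are hypotheses; and the bound $|A\cdot x|<1/15$ in (1) is arranged by enlarging $R_0$, using the decay of $|A|$. Since moreover $\limsup_{r\to\infty}|e^{\sigma r}u|=0$ for every $\sigma>0$, Theorem \ref{thmu} applies and yields $u\equiv 0$ on $\Sigma\setminus B_{R_0}$. On that end $\Sigma_u=\Sigma$, so $\Sigma'$ and $\Sigma$ coincide on a nonempty open set; since minimal surfaces are real-analytic — equivalently, by the classical unique continuation for the minimal surface system invoked at the end of the proof of Theorem \ref{thmu} — two connected minimal surfaces sharing an open piece must coincide, and therefore $\Sigma'=\Sigma$.
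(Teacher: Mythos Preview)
Your proposal is correct and follows exactly the paper's approach: check that $\Sigma$ meets the hypotheses of Theorem~\ref{thmu} (minimality handles (1)--(2), the assumed tilt and area growth give (3)--(4)), use the corollary to Lemma~2.26 of \cite{CM} together with the decay of $|A|$, $|u|$, $|\nabla u|$ to put $u$ into the class $|\Delta_\Sigma u|\le Cr^{-1/2}|u|$, and then invoke Theorem~\ref{thmu}. The paper's own proof is a two-line sketch of precisely this, so your write-up is simply a detailed expansion of it; in particular your ``bookkeeping'' paragraph showing why the exponent $1/4$ forces every nonlinear term into $Cr^{-1/2}|u|$ makes explicit what the paper leaves implicit.
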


\begin{proof}
$\Sigma$ satisfies the conditions in Theorem \ref{thmu}, and under the assumptions of asymptotic behavior here $u$ satisfies a elliptic inequality as in \ref{thmu}. So we can apply unique continuation theorem to conclude the result.
\end{proof}

In other word, if two minimal surfaces have one of each ends be asymptotic to same end under certain meaning, then they must coincident.

There are many results of ends behavior of minimal surface in $\mathbb{R}^3$, for example in \cite{MP} Meeks and P\'erez list many classical results. In particular, there are a lot of minimal surfaces in $\mathbb{R}^3$ with ends asymptotic to same thing. For example in \cite{WW}, Weber and Wolf construct a series of embedded minimal surface with two catenoidal ends and many planar ends; well-known Sherk' singly periodic surface has two planar ends. So the rigidity result only holds under stronger meaning of asymptoticity, like what we assume in our theorem.

Our assumption in Theorem \ref{thmr} holds for many minimal surfaces. For example:

\begin{itemize}
    \item Plane
    \item Catenoid
    \item Helicoid
    \item Enneper Surface (See \cite{Enneper} for detailed information)

\end{itemize}

Thus we have the following corollary:

\begin{cor}
Any minimal surface has one end asymptotic to one end of the minimal surfaces in the above list, in the sense in Theorem \ref{thmr}, must coincide with the whole minimal surface.
\end{cor}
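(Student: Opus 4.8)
The plan is to obtain the corollary as a direct application of Theorem \ref{thmr}. The statement quantifies asymptoticity "in the sense in Theorem \ref{thmr}", so the hypotheses on the graph function $u$ --- namely $\limsup_{r\to\infty}|e^{\sigma r}u|=0$ for every $\sigma>0$ together with $|\nabla u|\leq C|u|^{1/2}/r^{1/4}$ --- are already built into the formulation, and with them the elliptic inequality $|\Delta_\Sigma u|\leq \frac{C}{r^{1/2}}|u|$ follows from the Colding--Minicozzi equation for minimal graphs and the curvature bound, exactly as inside the proof of Theorem \ref{thmr}. Hence the only genuine work is to check that each surface $\Sigma$ in the list (plane, catenoid, helicoid, Enneper surface) satisfies the four structural hypotheses imposed on the base surface in Theorem \ref{thmr}: minimality in $\mathbb{R}^3$, at most exponential area growth, tilt no more than some $\lambda<1$ outside a large ball, and $|A|\leq C/r^{1/4}$ outside a large ball.

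First I would recall the classical parametrizations and dispatch the easy items. Minimality is standard for all four. For area growth, the plane has quadratic growth and the catenoid, helicoid and Enneper surface all have polynomial area growth, which is in particular at most exponential. For the curvature bound, one reads off the second fundamental form: $A\equiv0$ on the plane; on the catenoid $|A|^2$ decays like $r^{-4}$; on the helicoid $|A|^2$ decays like $r^{-4}$ away from the axis; and on the Enneper surface $|A|$ decays polynomially. In each case $|A|\leq C/r^{1/4}$ holds once $r$ is large, hence on $\Sigma\setminus B_{R_0}$ for a suitable $R_0$ and $C$.

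The step I expect to be the main obstacle is the tilt condition $|x^\top|^2\geq(1-\lambda)r^2$. For the plane and for the asymptotically flat catenoidal end it is immediate, since the tangent plane is nearly radial at infinity and $x^\top$ is a definite fraction of $x$. For the helicoid and the Enneper surface the position vector can carry a nonnegligible normal component in regions that are far from the origin yet far from being radial, so there one must restrict to the genuinely asymptotically planar portion of the end --- i.e. read the conclusion on the part of the end over which the surface is graphical over a fixed plane --- and verify directly from the parametrization that $|x^\top|^2/r^2$ is bounded below by a positive constant there. Once this uniform tilt bound is secured for the relevant end, Theorem \ref{thmr} applies verbatim: the graph $\Sigma'=\Sigma_u$ over $\Sigma$, with $u$ decaying faster than every exponential and satisfying the stated gradient control, forces $u\equiv0$, so $\Sigma'$ coincides with $\Sigma$ on that end and hence, by unique continuation of minimal surfaces (or the maximum principle), with all of $\Sigma$.
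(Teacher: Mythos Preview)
Your approach is exactly the paper's: the corollary carries no proof in the text and is presented as an immediate consequence of the assertion, made just before the bulleted list, that the plane, catenoid, helicoid, and Enneper surface all satisfy the hypotheses of Theorem~\ref{thmr}. You in fact go further than the paper by sketching the verification of those hypotheses for each surface and by correctly flagging the tilt condition $|x^\top|^2\geq(1-\lambda)r^2$ (and implicitly the curvature bound near the helicoid axis) as the delicate points --- scruples the paper itself does not address.
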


\subsubsection{Rigitidy of Self Shrinker}
Another interesting surfaces appearing in geometric analysis are self-shrinkers. They are homothetic solutions to mean curvature flow at time $-1$. It satisfies the equation

\begin{equation}
H=\frac{x^\bot}{2}
\end{equation}

We obtain the following rigidity theorem for self-shrinkers:

\begin{thm}
Let $\Sigma$ be a $2$-dim complete self-shrinker in $\mathbb{R}^n$, one end of it is asymptotic to a plane exponentially, which tilts no more than $C/r^{1/2}$ for some fixed constant $C$. Then $\Sigma$ coincides with the plane.
\end{thm}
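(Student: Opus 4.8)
The plan is to reduce this statement to Theorem \ref{thmplane} (or directly to Theorem \ref{thmu}) by checking that a self-shrinker with an exponentially planar end satisfies all the required hypotheses, and then to run the same coordinate-function argument. First I would localize: since only one end is asymptotic to a plane, throw away a large ball $B_R$ and work with the single end $\Sigma \setminus B_R$, which by assumption is a graph of a vector-valued function over a $2$-plane, with the graphing function and its derivatives up to order $3$ decaying exponentially. After an orthogonal change of coordinates assume the plane is $\{x_3 = \cdots = x_n = 0\}$, so the end is $(x_1, x_2, x_3(x_1,x_2), \ldots, x_n(x_1,x_2))$ with each $x_j$ ($j \geq 3$) exponentially small together with its derivatives.

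Next I would verify the hypotheses of Theorem \ref{thmu} for this end. The tilting condition $|x^\top|^2 \geq (1-\lambda)r^2$: the hypothesis ``tilts no more than $C/r^{1/2}$'' should be read as $|x^\bot|^2 \leq (C/r^{1/2})^2 r^2 = C^2 r$, hence $|x^\top|^2 = r^2 - |x^\bot|^2 \geq r^2 - C^2 r \geq (1-\lambda)r^2$ for $r$ large, so condition (3) holds after possibly enlarging $R$. For conditions (1) and (2), use the self-shrinker equation $H = x^\bot/2$: then $H \cdot x = |x^\bot|^2/2 \leq C^2 r /2$ — wait, this grows, so I must be more careful and instead use $H\cdot x = \langle x^\bot, x\rangle/2 = |x^\bot|^2/2$, which is bounded by the tilting estimate only if the tilting is $o(r^{-1/2})$; more precisely $|x^\bot|^2/2 \leq C^2/2$ requires tilting $\leq C/r$. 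Here is where I expect the main friction: reconciling the stated ``tilts no more than $C/r^{1/2}$'' with the boundedness of $H\cdot x$ and the smallness of $|A\cdot x|$. The resolution is that for an exponentially planar end, $|x^\bot|$, $|A|$, $|\nabla H|$ are all themselves exponentially small (they are controlled by the $C^3$-norm of the graphing function), so $H \cdot x$, $|A\cdot x|$, $|\nabla H \cdot x|$ decay exponentially and in particular conditions (1), (2) hold with room to spare, and $2 + H\cdot x > 0$ trivially. Exponential area growth of the end is immediate since an exponentially planar end has area growth $\sim \pi r^2$.

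With all hypotheses of Theorem \ref{thmu} in place, I would invoke the computation $\Delta_\Sigma x_j = \langle \partial_j, H\rangle$ valid for any surface. Using $H = x^\bot/2$, we get $\Delta_\Sigma x_j = \langle \partial_j, x^\bot\rangle/2 = \langle \partial_j^\bot, x\rangle/2$, and $|\langle \partial_j, x^\bot\rangle| \leq |x^\bot| \leq (C/r^{1/2}) r = C r^{1/2}$; combined with $|x_j|$ being comparable to $r$ on a planar-type end this does not immediately give the $r^{-1/2}|u|$ form. Instead, for $j \geq 3$ the coordinate $x_j$ is exponentially small, and $|\Delta_\Sigma x_j| = |\langle \partial_j, x^\bot\rangle|/2 \leq |x^\bot|/2$, which is exponentially small, so in particular $|\Delta_\Sigma x_j| \leq (C/r^{1/2})|x_j|$ once one checks that $|x^\bot|$ is controlled by a constant times $|x_j|$-type quantities — this follows because $x^\bot$ is a normal vector whose components are built from the derivatives of the graphing functions $x_3, \ldots, x_n$, all of the same exponential order. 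Then Theorem \ref{thmu} applied to each $u = x_j$, $j \geq 3$ (each exponentially decaying by hypothesis), gives $x_j \equiv 0$ on $\Sigma \setminus B_R$, so the end lies in the plane; by unique continuation for the minimal-surface-type system (or the maximum principle, as in the proof of Theorem \ref{thmplane}) $\Sigma$ is the plane. I would present the argument in exactly this order, and flag the verification that the self-shrinker equation plus exponential planarity forces the geometric quantities $H\cdot x$, $|A\cdot x|$, $|\nabla H \cdot x|$ to be small as the one genuinely load-bearing step.
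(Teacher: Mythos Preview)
Your proposal follows essentially the same route as the paper: compute $\Delta_\Sigma x_j = \langle \partial_j, H\rangle = (x^\bot)_j/2$ from the self-shrinker equation, bound this by $(C/r^{1/2})|x_j|$, and invoke Theorem \ref{thmu} for each normal coordinate $x_j$. The paper's proof is literally two lines: it writes $\Delta_\Sigma x_j = (x^\bot)_j/2 \leq C|x_j|/(2r^{1/2})$ and says ``apply Theorem \ref{thmu} as before.''

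The one place you diverge is in how you obtain the elliptic inequality. The paper uses the stated tilting hypothesis \emph{directly}: ``tilts no more than $C/r^{1/2}$'' is meant to feed straight into the bound $|(x^\bot)_j| \leq (C/r^{1/2})|x_j|$, which is exactly the $|\Delta_\Sigma u| \leq (C/r^{1/2})|u|$ form required by Theorem \ref{thmu}. You instead detour through the exponential smallness of the graphing function and its derivatives to argue that $|x^\bot|$ and $|x_j|$ are of comparable exponential order; this works, but it obscures why the $C/r^{1/2}$ tilting condition is in the statement at all. Your worries about reconciling the tilting condition with boundedness of $H\cdot x$ and $|A\cdot x|$ are legitimate---the paper does not address them explicitly and, as you correctly surmise, is implicitly relying on the exponential planarity to make all curvature quantities small so that the hypotheses of Theorem \ref{thmu} hold. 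So your more careful verification of (1)--(4) is a genuine addition, not a detour, but for the key elliptic inequality you should present the tilting hypothesis as doing the work rather than routing around it.
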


\begin{proof}
In this case, we have

$$\Delta_\Sigma x_j=\frac{(x^\bot)_j}{2}\leq\frac{C\vert x_j\vert}{2r^{1/2}}$$

then apply Theorem \ref{thmu} just like in previous theorem, we get the result.
\end{proof}

{\it Remark}: Besides the rigidity result for minimal surfaces in $\mathbb{R}^3$, the rest results of this section can be applied to surfaces with arbitrary codimensions.

\end{document}